\newtheorem*{cor*}{Corollary}
\newtheorem{theorem}{Theorem}
\newtheorem{lemma}[theorem]{Lemma}
\newtheorem*{lemma*}{Lemma}
\newtheorem*{conj*}{Conjecture}
\newtheorem*{prop*}{Proposition}
\newtheorem{prop}[theorem]{Proposition}
\newtheorem{cor}[theorem]{Corollary}
\theoremstyle{definition}
\newtheorem{remark}[theorem]{Remark}
\newtheorem*{remark*}{Remark}
\newtheorem*{question*}{Question}
\newtheorem{example}[theorem]{Example}
\renewcommand{\H}{\mathcal{H}}
\newcommand{\comment}[1]{} 
\newcommand{\wT}{\widetilde{T}}
\def\im{\operatorname{Im}} \def\ker{\operatorname{Ker}} \def\stab{\operatorname{Stab }}
\def\sgn{\operatorname{sgn}}  
 \def\Q{\mathbb{Q}}\def\R{\mathbb{R}}\def\Z{\mathbb{Z}}
\def\le{\leqslant} \def\ge{\geqslant}
\def\tr{\operatorname{tr}}
\def\SL{\mathrm{SL}} \def\PSL{\mathrm{PSL}}\def\GL{\mathrm{GL}}
\def\X{\mathbf{X}}
\def\e{\varepsilon_\Gamma}\def\w{w_\Gamma}
\def\DD{\Delta} \def\G{\Gamma}
\def\SS{\Sigma}
\def\dd{\delta} \def\ss{\sigma}
\def\g{\gamma}
\def\ppf#1#2{\frac{\partial #1}{\partial #2}}
\def\ppsf#1#2{\frac{\partial_\SS #1}{\partial #2}}
\def\pp#1#2{\partial #1\slash \partial #2}
\def\pps#1#2{\partial_\SS #1\slash \partial #2}
\def\be{\begin{equation}}  \def\ee{\end{equation}}
\def\ov#1{\overline{#1}}
\def\vp{\varphi}\def\wG{\widetilde{\Gamma}}\def\wL{\widetilde{L}}
\def\rar{\rightarrow}\def\lrar{\longrightarrow}
\def\res{\operatorname{res}}\def\inf{\operatorname{inf}}
\def\hom{\operatorname{Hom}}\def\tg{\mathrm{tg}}
\def\vv{\mathbf{v}}\def\deg{\operatorname{deg}}\def\ind{\operatorname{Ind}}
\def\la{\langle}\def\ra{\rangle}
\title{A trace formula for Hecke operators on Fuchsian groups}
\author{Alexandru A. Popa}
\address{Institute of Mathematics of the Romanian Academy\\
P.O. Box 1-764, Bucharest RO-70700, Romania}
\email{aapopa@gmail.com}
\keywords{Trace formula; Hecke operators; cohomology of Fuchsian groups}
\subjclass[2010]{11F11, 11F25, 11F75}
\thanks{The author was partially supported by the
CNCS-UEFISCDI grant PN-III-P4-ID-PCE-2020-2498.}
\begin{document}

\begin{abstract}
In this paper we give a trace formula for Hecke operators acting on the
cohomology of a Fuchsian group of finite covolume $\Gamma$ with coefficients in a module $V$.
The proof is based on constructing an operator whose trace on $V$ equals the
Lefschetz number of the Hecke correspondence on cohomology, generalizing the operator introduced
together with Don Zagier for the modular group.
\end{abstract}

\maketitle

\section{Introduction}

Let $\G$ be a discrete, finite covolume subgroup of $\PSL_2(\R)$,
and $\SS$ a double coset of $\G$ contained in the commensurator
$\widetilde{\G}\subset\GL_2^+(\R)$. For $V$ a finite dimensional $\widetilde{\G}$-module,
we consider the action of the Hecke operator defined by the double coset $\SS$,
which we denote by $[\SS]$, on the cohomology groups
$H^n(\G,V)$,  $n=0,1,2$. The goal of this paper is to prove a trace formula of the type
\be\label{e_TF0}
\sum_i (-1)^i \tr\big( [\SS], H^i(\G,V) \big)=\sum_{X\subset \SS} \e(X)\cdot \tr(M_X, V) ,
\ee
where the sum is over $\G$-conjugacy classes $X$ with representatives $M_X\in X$,
and  $\e(X)$ are certain conjugacy class invariants. We assume throughout that $V$ is a finite 
dimensional vector space, over a field in which the orders of finite elements in $\G$ are invertible. 

When $V=V_{k-2}$, the $\GL_2(\R)$-module of homogeneous polynomials of degree $k-2$,
we obtained such formulas with Zagier in~\cite{PZ} for the modular group,
and in~\cite{P} for congruence subgroups,
under a mild hypothesis on $\SS$. By
a version of the Eichler-Shimura correspondence  we have a Hecke-equivariant isomorphism
\[H^1(\G, V_{k-2})\simeq M_{k}(\G)\oplus S_k(\G)^c
\]
where $M_{k}(\G)$ is the space of modular forms and $S_k(\G)^c$ the space of anti-holomorphic cusp
forms. The trace on $H^0$ is easy to compute  (and $H^2$ is trivial for congruence subgroups), 
and one gets explicit formulas on the space of modular forms. 

Let $\Q[\SS]$ be the vector space of finite linear combinations of elements of $\SS$, viewed as
a left and right $\Z[\G]$-module. For the modular group, we showed in~\cite{PZ} that
the right-hand-side of~\eqref{e_TF0} arises as the trace on $V$ of a specific element $\wT_\SS\in\Q[\SS]$,
giving the action of the Hecke operator $[\SS]$ on the space of period polynomials associated
to modular forms. The same operator introduced for the modular group was used in~\cite{P} for congruence subgroups, because formula~\eqref{e_TF0} behaves well with respect to induction: if $\G'\subset \G$
is a finite index subgroup and $V$ a $\G'$-module, then a formula of type~\eqref{e_TF0} for $\G$ 
and the induced module $\ind_{\G'}^\G V$ implies a formula for $\G'$ and $V$, 
by the Shapiro lemma and an argument as in~\cite[Sec. 4]{P}. When $V$ is also a $\G$-module, the
trace of elements $M\in \SS$ on $\ind_{\G'}^\G V$ is easy to compute, and in~\cite{P2}
we obtain from~\eqref{e_TF0} uniform formulas for the trace of a composition of Atkin-Lehner and Hecke operators on  $S_k(\G_0(N))$, which hold without any restriction on the index of the operators.

The present paper is motivated by generalizing a key ingredient of~\cite{PZ}
to Fuchsian groups $\G$ of finite covolume:
find elements $\wT_\SS\in \Q[\SS]$ such that the left hand side of~\eqref{e_TF0} is given by $\tr(\wT_\SS,V)$.
Such a group $\G$ admits non-trivial Hecke operators if and only if it is of arithmetic type,
namely $\G$ is commensurable
either with a congruence subgroup of the modular group (if $\G$ has cusps), or with a congruence subgroup of
the unit group in a definite quaternion algebra (if $\G$ has no cusps).
In this paper we treat both cases in a uniform manner, and construct explicitly such elements~$\wT_\SS$,
under an assumption on $V$ when $\G$ has no cusps.
 Even for the modular group, our method is different from that used
in~\cite{PZ}, where we used the theory of period polynomials. Rather, we use the
action of Hecke operators on the algebraically defined cohomology groups $H^i(\G,V)$ using a presentation of $\G$
in terms of generators and relations.

As known to Poincar\'e,  a finite covolume Fuchsian group $\G$ has a presentation in terms of hyperbolic elements
$\g_1, \g_2, \ldots, \g_{2g-1}, \g_{2g}$, elliptic elements $s_1, \ldots, s_l$ generating the stabilizers of all
non-equivalent elliptic points, and  parabolic elements $t_1, \ldots, t_h$ belonging to non-conjugate maximal parabolic subgroups, subject to the relations
  \be \label{e_fuchsian}  [\g_1, \g_2] \cdot \ldots \cdot[\g_{2g-1}, \g_{2g}]\;
 s_1\cdot\ldots \cdot s_l\cdot t_1\cdot\ldots \cdot t_h=1,
  \quad s_i^{m_i}=1,  \ m_i\ge 2  \;. \ee
The signature $(g;m_1,\ldots, m_l; h)$ is an invariant of the group, and by the Gauss-Bonnet formula
  \be\label{e_chi}
  2g-2+\sum_{j=1}^l \Big(1-\frac{1}{m_j}\Big)+h=\frac{|\G\backslash\H|}{2\pi},
  \ee
where~$|\G\backslash\H|$ is the area of a fundamental domain for $\G$ acting on the upper half plane $\H$, 
with respect to the standard hyperbolic metric.
Moreover, for every signature such that the left side is positive, there exists such a group $\G$.

To state the main result, in this introduction we assume $\G$ has cusps ($h\ge 1$). Replace $t_h$ in
terms of the other generators eliminates one relation, and consequently we view $\G$ as a group with
generators $g_1, \ldots, g_n$ with
$n=2g+\ell+h-1$, and relations $g_i^{m_i}=1$ for $i=1,\ldots, \ell$. For every $g\in \G$, in the group algebra
$\Z[\G]$ we have a decomposition
\[1-g=\sum_{i=1}^n (1-g_j) \ppf{g}{g_j}, 
\]
for some elements $\pp{g}{g_i}\in \Z[\G]$. These elements are not unique, unless $\G$ is free ($\ell=0$), 
and in the case of a free group they are called Fox derivatives. They were introduced by Ralph Fox in the 1950s~\cite{F53}, who developed a free differential calculus with applications to topology and knot theory. 

We will also need a generalization of these elements $\pps{g_i}{g_j}\in \Z[\SS]$, defined by 
\[
T_\SS(1-g_i)=\sum_{j=1}^n (1-g_j) \ppsf{g_i}{g_j}
\]
where $T_\SS=\sum_{\tau\in\G \backslash \SS} \ov{\tau}$ is the usual Hecke operator acting on modular forms, 
with $\ov{\tau}$ fixed coset representatives. Note that for $\SS=\G$ the elements $\pps{g_i}{g_j}$
reduce to the usual Fox derivatives. Define 
\[
\wT_\SS=T_\SS-\sum_{i=1}^n \ppsf{g_i}{g_i}+\sum_{i=1}^\ell \pi_i \cdot \ppsf{g_i}{g_i} \in \Q[\SS],
\]
where $\pi_i=(1+g_i+\ldots+g_i^{m_i-1})/m_i$ are idempotents associated to the elliptic elements $g_i$,
$i=1,\ldots, \ell$. In Proposition~\ref{prop_cusps} we prove the following result. 
\begin{prop*}
For $\G$ a finite covolume Fuchsian group with cusps and $V$ a finite 
dimensional $\wG$-module over a field in which the orders of elliptic elements are invertible we have:
\[
\sum_{i=0,1} (-1)^i \tr([\SS], H^i(\G,V))=\tr(\wT_\SS, V).
\]
\end{prop*}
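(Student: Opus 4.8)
The plan is to reduce the Lefschetz number to an alternating sum of traces on a short cochain complex, via the Hopf trace principle. Since $\G$ has cusps, eliminating the long relation in~\eqref{e_fuchsian} exhibits $\G$ as a free product of the finite cyclic groups $\la g_i\ra\cong\Z/m_i$ ($1\le i\le\ell$) with a free group on the remaining $2g+h-1$ generators. Over a field $k$ in which the $m_i$ are invertible the algebras $k[\Z/m_i]$ are semisimple, so the torsion factors are cohomologically trivial and $H^i(\G,V)=0$ for $i\ge 2$. First I would write down the length-two projective resolution
\[0\to \bigoplus_{i=1}^\ell k[\G]\pi_i \xrightarrow{\ \iota\ } k[\G]^n \xrightarrow{\ d_1\ } k[\G]\xrightarrow{\ \varepsilon\ } k\to 0,\qquad d_1(e_j)=g_j-1,\quad \iota(\pi_i)=\pi_i e_i,\]
which is exact because the first syzygy $\ker d_1$ of a free product of cyclic groups is spanned by the elements $\pi_i e_i$ (note $\pi_i(g_i-1)=0$), each summand $k[\G]\pi_i$ being projective as $\pi_i$ is idempotent. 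Applying $\hom_{k[\G]}(-,V)$ yields the finite cochain complex
\[C^\bullet:\quad V\xrightarrow{\ d^0\ } V^n\xrightarrow{\ d^1\ } \bigoplus_{i=1}^\ell \pi_i V,\qquad d^0(v)=\big((g_j-1)v\big)_{j},\quad d^1(w)=\big(\pi_i w_i\big)_{i},\]
whose cohomology is $H^0=V^\G$, $H^1(\G,V)$, and $H^2=0$.

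Next I would lift the Hecke operator $[\SS]$ to a chain endomorphism $\Phi^\bullet$ of $C^\bullet$. In degree zero $\Phi^0$ is essentially left multiplication by $T_\SS$, which on $H^0=V^\G$ recovers the classical double-coset action $v\mapsto\sum_\tau\ov\tau v$. The crucial point is that the defining identity $T_\SS(1-g_i)=\sum_j(1-g_j)\,\ppsf{g_i}{g_j}$ of the generalized Fox derivatives is exactly the relation needed to lift $\Phi^0$ to a map $\Phi^1$ on $V^n$ given by the matrix $\big(\ppsf{g_i}{g_j}\big)_{i,j}$, so that $\Phi^0$ and $\Phi^1$ commute with $d^0$. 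Using $\pi_i(g_i-1)=0$ once more, $\Phi^1$ then induces the map $\Phi^2$ on $\bigoplus_i\pi_i V$ whose $i$-th block is $\pi_i\,\ppsf{g_i}{g_i}$, completing the chain map. One then has to check that $\Phi^\bullet$ really induces $[\SS]$ on $H^\bullet$, by matching $\Phi^1$ on $1$-cocycles with the usual description of the Hecke correspondence through the permutation of the cosets $\G\backslash\SS$ under right multiplication by the generators.

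With the chain map in place, the Hopf trace principle applied to the finite complex $C^\bullet$ gives
\[\sum_{i=0,1}(-1)^i\tr\big([\SS],H^i(\G,V)\big)=\sum_{i=0}^{2}(-1)^i\tr(\Phi^i,C^i),\]
and the three terms are read off directly. One has $\tr(\Phi^0,C^0)=\tr(T_\SS,V)$; the trace of the matrix $\Phi^1$ is the sum of its diagonal entries, $\sum_{j=1}^n\tr\big(\ppsf{g_j}{g_j},V\big)$; and since $C^2=\bigoplus_i\pi_i V$, the idempotent identity $\tr(\pi_i X\pi_i,V)=\tr(\pi_i X,V)$ gives $\tr(\Phi^2,C^2)=\sum_{i=1}^\ell\tr\big(\pi_i\,\ppsf{g_i}{g_i},V\big)$. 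Assembling these, the alternating sum is precisely $\tr(\wT_\SS,V)$ by the definition of $\wT_\SS$.

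The hard part will be the construction of $\Phi^\bullet$ together with the proof that it induces $[\SS]$ on cohomology, rather than some unrelated endomorphism: this is where the purely algebraic formalism of the generalized Fox derivatives must be reconciled with the geometric double-coset definition of the Hecke operator, and where the left/right conventions for the action on $C^\bullet$ have to be fixed consistently. A second delicate point is that the derivatives $\ppsf{g_i}{g_j}$ are not unique (as $\G$ is not free), so one must verify that the resulting trace is independent of the choices. Finally, getting the degree-two contribution right depends essentially on the idempotent-corrected resolution above — and hence on the invertibility of the $m_i$ — which is what makes the complex $C^\bullet$ finite while keeping the Hecke action explicit.
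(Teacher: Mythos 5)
Your proposal is correct in substance, and it reaches the paper's identity by an equivalent computation organized differently. The paper never writes down a projective resolution: it splices the inflation--restriction sequence~\eqref{e_infres} (which collapses to three terms since $H^2(\G,V)=0$) with the four-term sequence~\eqref{e_exseqfree} computing $H^1(F,V)$, and identifies $\hom(R,V)^F$ with $\oplus_i \im\pi_i$ via $f\mapsto [f(r_i)]_i$. Unwinding those two exact sequences produces exactly your cochain complex $V\to V^n\to\oplus_i\pi_iV$, and your chain map $\Phi^\bullet$ is exactly the content of Proposition~\ref{prop_free} (degrees $0$ and $1$) and Proposition~\ref{prop_res_hecke} together with the identity $\pps{g_i}{g_i}\cdot\pi_i=\pi_i\cdot T_\SS^{i,i}$ (degree $2$); your appeal to the Hopf trace principle replaces the paper's term-by-term trace bookkeeping through the exact sequences. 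What your packaging buys is a single Euler-characteristic identity in place of three exact-sequence arguments and an explicit reason why the complex is finite; what the paper's buys is that the step you rightly flag as the hard part --- that $\Phi^1$ really induces the double-coset operator $[\SS]$ on $H^1(\G,V)$ --- is dispatched transparently by Lemma~\ref{lem_coc_hecke}, which follows from the Fox-derivative cocycle formula of Lemma~\ref{lem_cocycle} (an induction on word length) combined with Shimura's formula~\eqref{e_coset}; you would need the same verification. One small economy: the independence of the trace from the non-unique choices of $\pps{g_i}{g_j}$ needs no separate check for this proposition, since your argument shows every admissible choice computes the same Lefschetz number (independence of the conjugacy-class sums $\e(X)$ is a separate matter, handled in Theorem~\ref{thm_main}(b)).
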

Writing $\wT_\SS=\sum_{M} c_M M$, with finitely many non-zero coefficients $c_M$, formula~\eqref{e_TF0}
immediately follows by taking
\be\label{e_eps}
\e(X)=\sum_{M\in X} c_M
\ee
for $X$ a conjugacy class. In Theorem~\ref{thm_main} we also 
show that $\e(X)$ do not depend on the choices made in defining $\wT_\SS$.

We construct such elements also when $\G$ has no cusps (Proposition~\ref{prop_compact}),
assuming it has no elliptic elements either and under some restriction on the module $V$,
which is likely not necessary (see Remark~\ref{rem_compact}). In constructing the elements~$\wT_\SS$
and proving their properties, we use results of Lyndon~\cite{L50}, so our approach can be seen as belonging to combinatorial group theory.

We now discuss the conjugacy class invariants $\e(X)$.
For $M\in \SS$, let $n(M)$ be the number of fixed points of $M$ in $\H\cup\{\text{cusps of $\G$}\}$, 
and let $\G_M$ denote the centralizer of $M$ in $\G$. We define:
\[
\w(M)=\frac{ (-1)^{n(M)+1}}{|\G_M|}  \text{ if $M$ not scalar},\qquad
\w(M)=-\frac{|\G\backslash\H | }{2 \pi} \text{ if $M$ scalar},
\]
with the understanding that $\w(M)=0$ if $\G_M\ne\G$ is infinite.
More explicitly, it can be shown (e.g.~\cite[Proof of Theorem 2]{O77}) that any
$M\in \widetilde{\G}$ for which $\G_M$ is finite is either: elliptic  (that is $\tr^2M<4 \det M$) so $n(M)=1$;
or it is split hyperbolic (that is $\tr^2M>4 \det M$ and $M$ fixes two distinct cusps of $\G$) so $n(M)=2$ and
$|\G_M|=1$.

Since $n(M)$ and $|\G_M|$ are conjugacy class invariants, we define $\w(X)=\w(M)$ for a conjugacy class $X\subset \SS$ with representative $M$.
We have shown in~\cite{PZ,P} that the trace formula~\eqref{e_TF0} holds for
\be\label{e_conj}\e(X)=\w(X),
\ee
if $\G$ is any finite index subgroup of $\G_1=\SL_2(\Z)$, and $\SS$ is a double coset satisfying
$|\G\backslash \SS|=|\G_1\backslash \G_1\SS|$.

We expect formula~\eqref{e_conj} to hold for all finite covolume Fuchsian groups.
It is an interesting open problem to prove it using only the elements $\wT_\SS$ defined
in this paper, which do not depend on the module $V$. We can only do it in the case
of $\SL_2(\Z)$ in Section~\ref{sec_modular}, by relating the element introduced here to the explicit one constructed in~\cite{PZ}, for which we explicitly computed the coefficient sums~\eqref{e_eps}.

One case when we can prove formula~\eqref{e_conj}  is for the trivial Hecke operator $\SS=\G$.
Under some restrictions stated precisely in Corollary~\ref{cor_dim}, we obtain
a formula for the Euler-Poincar\'e characteristic of the $\G$-module~$V$:
\[\sum_i (-1)^i \dim H^i(\G,V)=-\frac{|\G\backslash \H|}{2\pi}\dim V + 
\sum_{M\in E(\G)} \frac{1}{|\G_M|}\tr(g, V) ,
\]
where $E(\G)$ is a set of representatives for elliptic conjugacy classes in $\G$.
However in this case the formula is known for a much larger class of groups,
by work of Bass~\cite{Ba} and Brown~\cite{Br}. The constants $-|\G\backslash \H|\slash 2\pi$
and $1\slash|\G_M|$ appearing above
occur there as homological Euler characteristics of the groups $\G$ and $\G_M$, 
as defined for example by Serre~\cite{Se}. It would be interesting to extend
the techniques of the papers above to prove a formula for the Lefschetz number of a Hecke correspondence 
of the type~\eqref{e_TF0} for a more general class of arithmetic groups. In particular,
for Fuchsian groups of finite covolume, it would be interesting to give a more conceptual proof
of~\eqref{e_conj}, other than by constructing an explicit element $\wT_\SS$ as in~\cite{PZ}.

Formulas for Lefschetz numbers of Hecke correspondences
are known by the topological trace formula of Goresky and MacPherson, 
developed over the course of a decade~\cite{GM1,GM2}. 
Such formulas are proved for~$\G$ any arithmetic subgroup of a reductive group 
and the algebraic group cohomology replaced by other geometric cohomology theories.
The formulas involve local contributions from the fixed point varieties of the correspondence,
which lump together contributions from the various conjugacy classes in $\G$. It would be interesting 
to investigate whether a simple minded version stated in terms of group cohomology as in~\eqref{e_TF0}
holds in this more general setting.

\emph{Acknowledgements.} I would like to thank Vicen\c tiu Pa\c sol for helpful discussions
while preparing this paper.



\section{Hecke operators on the first cohomology of groups defined by generators and relations}

\subsection{Groups defined by generators and relations}\label{sec_genrel}

Here we review some results of Lyndon~\cite{L50}, who used them to compute the cohomology of a
group defined by a single relation.

Let $\G=\la g_1, \ldots, g_n : r_1, \ldots , r_m\ra$ be a group given in terms of a presentation with generators
$g_i$, and relations $r_j=1$. We view also $\G= F\slash R$ as a quotient of a free group $F$ with generators
denote by the same symbols $g_1,\ldots, g_n$,
by the normal subgroup $R$ generated by the elements $r_j\in F$. We identify elements of $\G$ with
right cosets $Rg$ of $F\slash R$.
To keep the notation simple, we use the same symbols
to denote the elements of $F$ and their projections onto $\G$, as the group will be clear from the context.

Let $\Z[F]$ be the group algebra of $F$.
For  $g\in F$ there are unique elements $\partial g\slash \partial g_i\in \Z[F]$ such that
\be\label{e_decg}
1-g=\sum_{i=1}^n (1-g_i) \frac{\partial g}{\partial g_i}.
\ee
The map $\partial \slash \partial g_i: F\mapsto \Z[F]$ is a cocycle for the right action
of $F$ on $\Z[F]$  (traditionally called a derivation of $F$), namely it satisfies
\[
\frac{\partial gh}{\partial g_i}=\frac{\partial h}{\partial g_i}+\frac{\partial g}{\partial g_i}\cdot h.
\]
It is the unique derivation of $F$ such that
$$\ppf{g_j}{g_i}=\dd_{i,j}, \quad  \ppf{g_i^{-1}}{g_j}=-\ppf{g_i^{-1}}{g_j} \cdot g_j , $$
where $\dd_{i,j}$ is the Kronecker delta function.

The group algebra $\Z[\G]$ is identified with the quotient of $\Z[F]$ by the right ideal generated
by elements $(r-1) a$ with $r\in R, a\in Z[F]$. For $g\in F$ we can define uniquely the
partial derivatives $\pp{g}{g_i}\in \Z[\G]$ by means of this projection. For $g\in \G$,
we define
$\pp{g}{g_i}\in \Z[\G]$ to be any elements such that~\eqref{e_decg} holds.
However these elements are no longer unique in $\Z[\G]$: for each $r\in R$ we have
\[0=1-r=\sum_{i=1}^n (1-g_i) \frac{\partial r}{\partial g_i}
\]
(here the symbol $r$ is viewed both as an element of $F$, and as its projection on $\G$ which equals the
identity element 1).
  The failure of uniqueness is precisely described in terms of the relators~$r_i$ by Lyndon~\cite[Lemma 5.1]{L50},
which we state below.
For $g\in F$, it is convenient to denote by $\nabla {g}=(\pp{g}{g_1},\ldots, \pp{g}{g_n})\in\Z[\G]^n$.

\begin{prop}[\cite{L50}] \label{prop_lyndon}
Let $\X=(X_1, \ldots , X_n)\in \Z[\G]^n$. In $\Z[\G]$ we have
 \[
 \sum_{i=1}^n (1-g_i) \cdot X_i=0 \text{ if and only if }
 \X=\sum_{j=1}^m  \nabla {r_j} \cdot k_j  \text{ for some $k_j\in \Z[\G] $}.
 \]
 \end{prop}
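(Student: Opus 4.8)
Both implications can be read off from the fundamental theorem of the free differential calculus of Fox~\cite{F53}, and I would split the proof accordingly. For the easy implication ($\Leftarrow$), suppose $\X=\sum_{j=1}^m \nabla r_j\cdot k_j$. Interchanging the order of summation and using, for each $j$, the identity $\sum_i(1-g_i)\,\partial r_j/\partial g_i=1-r_j$ (which is~\eqref{e_decg} read in $\Z[\G]$), one gets $\sum_i(1-g_i)X_i=\sum_j(1-r_j)k_j$. Each $r_j$ projects to the identity of $\G$, so $1-r_j=0$ in $\Z[\G]$ and the sum vanishes. The content is the reverse implication, and the plan is to lift the relation to the free group $F$, where Fox derivatives are unique, solve it there, and project back.

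The lifting rests on the fundamental theorem: the map $\Phi\colon\Z[F]^n\to\Z[F]$, $\Phi(a_1,\dots,a_n)=\sum_i(1-g_i)a_i$, is injective with image the augmentation ideal of $\Z[F]$, and $\Phi(Dg)=1-g$ for every $g\in F$, where $Dg\in\Z[F]^n$ is the vector of Fox derivatives of $g$ computed in $\Z[F]$ (so that $\nabla g$ is the image of $Dg$ in $\Z[\G]^n$). Given $\X$ with $\sum_i(1-g_i)X_i=0$ in $\Z[\G]$, I would choose arbitrary lifts $\widetilde X_i\in\Z[F]$ of the $X_i$. Then $Y:=\sum_i(1-g_i)\widetilde X_i=\Phi(\widetilde X_1,\dots,\widetilde X_n)$ projects to $0$, hence lies in the kernel $\mathfrak k$ of $\Z[F]\to\Z[\G]$, which, by the description recalled above, is the right ideal generated by all $r-1$, $r\in R$.

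Next I would reduce the generating set of $\mathfrak k$ and compute. Since $R$ is the normal closure of $r_1,\dots,r_m$, the telescoping identity $w_1\cdots w_t-1=\sum_k(w_k-1)\,w_{k+1}\cdots w_t$ together with $s^{-1}-1=-(s-1)s^{-1}$ show that $\mathfrak k$ is generated as a right ideal by the elements $g r_j g^{-1}-1$, $g\in F$. Each such generator equals $\Phi\big(D(g r_j g^{-1})\big)$, so applying the inverse of $\Phi$ to $Y$ gives a finite expression $(\widetilde X_i)=\sum_\alpha D(g_\alpha r_{j_\alpha}g_\alpha^{-1})\,b_\alpha$ in $\Z[F]^n$. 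The key calculation is the Fox derivative of a conjugate: two applications of the product rule and $\partial g^{-1}/\partial g_i=-(\partial g/\partial g_i)g^{-1}$ give, in $\Z[F]$,
\[
\frac{\partial (g r_j g^{-1})}{\partial g_i}=\frac{\partial r_j}{\partial g_i}\,g^{-1}+\frac{\partial g}{\partial g_i}\,(r_j-1)\,g^{-1}.
\]
Projecting to $\Z[\G]$ annihilates the second summand, since $r_j\mapsto 1$, and yields $\nabla(g r_j g^{-1})=\nabla r_j\cdot\ov{g}^{-1}$ in $\Z[\G]^n$, where $\ov g$ is the image of $g$. Projecting the expression for $(\widetilde X_i)$ then gives $\X=\sum_\alpha \nabla r_{j_\alpha}\cdot(\ov{g_\alpha}^{-1}\ov{b_\alpha})=\sum_{j=1}^m \nabla r_j\cdot k_j$ with $k_j=\sum_{\alpha:\,j_\alpha=j}\ov{g_\alpha}^{-1}\ov{b_\alpha}\in\Z[\G]$, which is the claim.

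The step carrying the real weight is the injectivity of $\Phi$: it is what lets the generating set of $\mathfrak k$ be transported faithfully through $\Phi^{-1}$, turning the membership $Y\in\mathfrak k$ into an explicit right-module decomposition of $(\widetilde X_i)$. The remaining delicate point is organizational rather than computational: one must pass to $F$ to exploit uniqueness of Fox derivatives, yet the vectors $D(g r_j g^{-1})$ only collapse to $\Z[\G]$-multiples of the $\nabla r_j$ after projection, the error term $\frac{\partial g}{\partial g_i}(r_j-1)g^{-1}$ disappearing precisely because it lands in $\mathfrak k$. Checking this vanishing, and that conjugates of the $r_j$ alone suffice to generate $\mathfrak k$, is where I would be most careful.
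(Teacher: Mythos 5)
Your proof is correct. The paper itself gives no proof of this proposition---it is quoted directly from Lyndon~\cite{L50}---and your argument is essentially the standard (Lyndon's) one: the easy direction from $\sum_i(1-g_i)\,\pp{r_j}{g_i}=1-r_j=0$ in $\Z[\G]$, and the converse from the freeness of the augmentation ideal of $\Z[F]$ as a right module on the $1-g_i$, the reduction of $\ker(\Z[F]\to\Z[\G])$ to the right ideal generated by the conjugates $gr_jg^{-1}-1$, and the conjugation formula $\nabla(gr_jg^{-1})=\nabla r_j\cdot\ov{g}^{-1}$. The only blemish is a sign: since $\Phi(Dw)=1-w=-(w-1)$, the generator $gr_jg^{-1}-1$ equals $-\Phi\big(D(gr_jg^{-1})\big)$, but this is harmlessly absorbed into the coefficients $b_\alpha$.
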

\begin{example} For the modular group $\G=\PSL_2(\Z)$, the proposition reduces to the aciclicity lemma~\cite[Lemma 2]{CZ}. The group $\G$ is the quotient of the free group on two elements $S,U$,
by the relations $r_1=S^2, r_2=U^3$. We have $\partial{r_1}\slash \partial S=1+S$,
$\partial{r_2}\slash \partial U=1+U+U^2$, and $\partial{r_1}\slash \partial U=0$,
$\partial{r_2}\slash \partial S=0$. Therefore in this case the proposition shows that
\[(1-S)Y=(1-U)Z \Leftrightarrow Y\in (1+S) \Z[\G],\ Z\in (1+U+U^2) \Z[\G],
\]
that is $\im(1-S)\cap\im(1-U)=\{0\}$. Since  $\ker(1+S)=\im(1-S)$, $\ker(1+U+U^2)=\im(1-U)$, we recover
the aciclicity lemma.
\end{example}

\comment{
Assume that $X_i\in \Z[\G]$ satisfy
$
\sum_{i=1}^n (1-g_i) \cdot X_i=0.$

 Then there are $k_j\in\Z[\G]$ such that
 $$X_i=\sum_{j=1}^m  \frac{\partial{r_j}}{\partial{g_i}} k_j.$$}

Let $V$ be a right $\G$-module, which we view also as a right $F$-module
with $R$ acting trivially on $V$. If $\varphi:\G\rightarrow V$ is a cocycle,
namely $\vp(gh)=\vp(h)+\vp(g)|h$, we view it as a cocycle on $F$ trivial on $R$
(by the inflation map $H^1(\G,V)\rar H^1(F,V)$). Note that by the cocycle relation
we have $\vp(rg)=\vp(gr)=\vp(g)$ for $r\in R, g\in F$.

The following lemma can also be found in~\cite[Sec. 3, eq. (5)]{L50}.

\begin{lemma} \label{lem_cocycle}
Let $V$ be a (right) $\G$-module, and $\varphi:\G\rightarrow V$ be a cocycle. Viewing $\vp$ as
a cocycle on $F$ as above, we have for $g\in F$:
$$\varphi(g)=\sum_{i=1}^n\varphi(g_i)\left|\frac{\partial{g}}{ \partial{g_i}}\right.\;.$$
 \end{lemma}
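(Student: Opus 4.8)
The plan is to exhibit the right-hand side as a cocycle on $F$ that agrees with $\vp$ on the free generators, and then to invoke the fact that a cocycle on a free group is uniquely determined by its values on the generators. Concretely, I would define a map $\psi\colon F\to V$ by
\[
\psi(g)=\sum_{i=1}^n \vp(g_i)\Big|\ppf{g}{g_i},
\]
where the right $F$-action on $V$ is extended $\Z$-linearly to $\Z[F]$, and show that $\psi=\vp$.

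First I would verify that $\psi$ is itself a cocycle for the right action of $F$ on $V$. This is a direct consequence of the derivation property of the Fox derivatives recorded after~\eqref{e_decg}: for $g,h\in F$,
\[
\psi(gh)=\sum_i \vp(g_i)\Big|\Big(\ppf{h}{g_i}+\ppf{g}{g_i}\cdot h\Big)=\psi(h)+\psi(g)\big|h,
\]
using that $X\mapsto \vp(g_i)|X$ is $\Z$-linear and compatible with right multiplication, $(\vp(g_i)|X)|h=\vp(g_i)|(Xh)$. Next, since $\ppf{g_j}{g_i}=\dd_{i,j}$, I obtain $\psi(g_j)=\vp(g_j)$ for each generator, so $\psi$ and $\vp$ are two cocycles on the free group $F$ agreeing on all the generators.

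Finally I would conclude $\psi=\vp$ on all of $F$. The self-contained justification is induction on the word length of $g$ in the $g_j^{\pm1}$, using the cocycle relations $\vp(gh)=\vp(h)+\vp(g)|h$ and the analogous one for $\psi$ to reduce the case $gh$ to the cases $g$ and $h$; the base cases are $g=1$, where both sides vanish (as $\vp(1)=0$ and $\ppf{1}{g_i}=0$), and $g=g_j^{\pm1}$. The only point requiring a short computation is $g=g_j^{-1}$: from $1-g_j^{-1}=-(1-g_j)g_j^{-1}$ one reads off $\ppf{g_j^{-1}}{g_i}=-\dd_{i,j}\,g_j^{-1}$, which matches the value $\vp(g_j^{-1})=-\vp(g_j)|g_j^{-1}$ forced by the cocycle relation applied to $g_j^{-1}g_j=1$. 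I do not anticipate any genuine obstacle: the statement is a formal consequence of the derivation property of the Fox calculus together with the defining cocycle relation, the entire argument resting on the observation that both $\ppf{\,\cdot\,}{g_i}$ and $\vp$ transform the same way under multiplication.
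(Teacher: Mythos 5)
Your proof is correct and follows essentially the same route as the paper: the paper also argues by induction on the word length of $g$, using the identity $1-g_ig=1-g+(1-g_i)g$ together with the cocycle relation $\varphi(g_ig)=\varphi(g_i)|g+\varphi(g)$, which is exactly the content of your observation that both sides transform the same way under multiplication and agree on the generators. Your packaging of the argument as ``two cocycles on $F$ agreeing on the generators coincide'' is a slightly more structured phrasing of the same induction, and your treatment of the case $g=g_j^{-1}$ is a correct (and welcome) explicit check of a base case the paper leaves implicit.
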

\begin{proof}
The statement follows from~\eqref{e_decg} by induction on the length of $g$ as a product in the generators $g_i$.
Indeed we have $1-g_ig=1-g +(1-g_i)g$, and $\varphi(g_i g)=\varphi(g_i)|g+\varphi(g)$.
\end{proof}

\subsection{Double coset operators on cohomology}

Let $\G$ be a group and $\wG$ its commensurator inside an ambient group.
Let $\SS\subset \wG$ be a double coset of $\G$, so that the number of
cosets $\G\backslash \SS$ is finite. Let $V$ be right~$\widetilde{\G}$-module.

In this paper we view the cohomology groups defined algebraically as
$H^i(\G,V)=Z^i(\G,V)\slash B^i(\G,V)$,
where $Z^i(\G,V)$ are (inhomogeneous) cocycles and $B^i(\G,V)$ coboundaries.
The double coset operator $[\SS]$ acts on cohomology as follows.
We fix representatives $\ov{\tau}\in \SS$ for cosets $\tau\in \G\backslash \SS$, and
for $g\in\G$, we let $\g_{\tau,g}\in\G$ be the unique element such that
\be\label{e_taug}
\ov{\tau}g=\g_{\tau,g} \ov{\tau g}.
\ee
If $\vp:\G^i\rar V$ is a a cocycle, following \cite{Sh} we define
\[
\vp|[\SS](h_1, \ldots , h_i)= \sum_{\tau\in  \G\backslash \SS}
\vp(\g_{\tau h_1^{-1},h_1}, \g_{\tau h_1^{-1}h_2^{-1},h_2},  \ldots,
\g_{\tau h_1^{-1}\cdot \ldots \cdot h_i^{-1},h_i} )|\ov{\tau}.
\]
Then $\vp|[\SS]$ is a cocycle, whose cohomology class is
independent of the choice of representatives~$\ov{\tau}$.

We are mostly interested in $H^0$ and $H^1$. We have $H^0(\G,V)=V^\G$, the space of
invariants of $\G$, and for $v\in V^\G$ we have
\[v|[\SS]=\sum_{\tau \in\G\backslash \SS}v|\ov{\tau}.
\]
If $\vp:\G\rightarrow V$  is a cocycle,  we have
  \be\label{e_coset}\vp|[\SS](g)= \sum_{\tau \in\G\backslash \SS} \vp(\g_{\tau g^{-1},g})|\ov{\tau} \;.\ee

\subsection{Double coset operators for groups generated by generators and relations}
Assume now $\G=F/R$ is given in terms of generators and relations as in section~\ref{sec_genrel}.
Let $\Z[\SS]$ be the set of finite linear combinations
$\sum n_j \ss_j$ with $\ss_j\in \SS$, $n_j\in\Z$. We view  $\Z[\SS]$ as a left and right $\Z[\G]$-module, and also
as a left and right $\Z[F]$-module by having $R$ act trivially by left and right multiplication.

Fix representatives $\ov{\tau}\in \SS$ for cosets $\tau\in \G\backslash \SS$, and define
\[T_\SS=\sum_{\tau\in \G\backslash \SS} \ov{\tau}\in \Z[\SS].\]
For a fixed generator $g_i$, the map $\tau\mapsto \tau g_i$ permutes the cosets
$\tau\in \G\backslash \SS$, so we can write:
\[ \begin{aligned}
  T_\SS(1-g_i)&=\sum_{\tau\in \G\backslash\SS}\ov{\tau}-\ov{\tau g_i^{-1}}g_i=
  \sum_{\tau}  (1-\g_{\tau g_i^{-1}, g_i})\ov{\tau}\\
  &= \sum_{j=1}^n (1-g_j)\sum_\tau\ppf{\g_{\tau g_i^{-1}, g_i}}{g_j} \ov{\tau},
  \end{aligned}
  \]
 where $\g_{\tau,g}$ is defined as in~\eqref{e_taug}.
  We conclude that
\be \label{e_hecke}T_\SS (1-g_i)=\sum_{j=1}^n (1-g_j) \ppsf{g_i}{g_j} \;, \text{ where }
 \ppsf{g_i}{g_j}:=\sum_{\tau \in \G\backslash \SS } \ppf{\g_{\tau g_i^{-1}, g_i}}{g_j} \cdot \ov{\tau} \in\Z[\SS].
 \ee
The elements  $\pps{g_i}{g_j}$ depend on the choice of representatives $\ov{\tau}$ for the cosets $\tau$,
and on the non-unique choices of $\pp{\g_{\tau g_i^{-1}, g_i}}{g_j}$,
but we omit the dependence from the notation.
For arbitrary $g\in F$ we define
\[
\ppsf{g}{g_i}=\sum_{j=1}^n \ppsf{g_j}{g_i} \ppf{g}{g_j},
\]
so that an easy computation gives
\[T_\SS (1-g)=\sum_{j=1}^n T_\SS (1-g_j)\ppf{g}{g_j}=\sum_{i=1}^n (1-g_i) \ppsf{g}{g_i}.
\]
Note that for $\SS=\G$, the trivial double coset,
we have $\pps{g}{g_j}=\pp{g}{g_j}$, so this is an extension to double cosets of the usual
Fox derivatives.
We also denote by $\nabla_\SS g$ the column vector of ``partial $\SS$-derivatives:''
\[
\nabla_\SS g =(\pps{g}{g_1}, \ldots,\pps{g}{g_n} ) \in \Z[\SS]^n,
\]
and by $M_\SS$ the $n\times n$ matrix with entries $\pps{g_j}{g_i}$. In matrix notation, the formula above becomes:
\[
\nabla_\SS g=M_\SS \nabla{g}.
\]
Lemma~\ref{lem_cocycle} generalizes to express the action of double coset operators on cocycles
in terms of partial $\SS$-derivatives.

\begin{lemma}  \label{lem_coc_hecke} If $\vp: \G\rightarrow V$ is a cocycle, for $1\le i \le n$ we have
  \be   \vp|[\SS](g_i)=\sum_{j=1}^n \vp(g_j)\left|\ppsf{g_i}{g_j}\right. . \ee
\end{lemma}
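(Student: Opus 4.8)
The plan is to evaluate the double coset operator on the generators directly from its definition and to reduce, coset by coset, to the free-group formula of Lemma~\ref{lem_cocycle}. First I would start from~\eqref{e_coset}, which gives $\vp|[\SS](g_i)=\sum_{\tau\in\G\backslash\SS}\vp(\g_{\tau g_i^{-1},g_i})\,|\,\ov{\tau}$. Since each $\g_{\tau g_i^{-1},g_i}$ lies in $\G$, I view $\vp$ as inflated to $F$ (so that it is trivial on $R$) and choose a lift of $\g_{\tau g_i^{-1},g_i}$ to $F$; Lemma~\ref{lem_cocycle} then expresses $\vp(\g_{\tau g_i^{-1},g_i})=\sum_{j=1}^n\vp(g_j)\,\big|\,\ppf{\g_{\tau g_i^{-1},g_i}}{g_j}$ in terms of the ordinary Fox derivatives.

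The second step is purely formal. I would substitute this expansion into the coset sum, interchange the two summations over $\tau$ and $j$, and use associativity of the right $\wG$-action, namely $(\vp(g_j)\,|\,a)\,|\,\ov{\tau}=\vp(g_j)\,|\,(a\ov{\tau})$ for $a\in\Z[\G]$ and $\ov{\tau}\in\SS$. This collects the sum as $\vp|[\SS](g_i)=\sum_{j=1}^n\vp(g_j)\,\big|\,\sum_{\tau}\ppf{\g_{\tau g_i^{-1},g_i}}{g_j}\,\ov{\tau}$, and the inner sum over $\tau$ is exactly $\ppsf{g_i}{g_j}$ by the definition~\eqref{e_hecke}. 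This yields the asserted formula.

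The only delicate point, and the step I expect to be the main obstacle, is checking that the right-hand side is well defined, since over $\Z[\G]$ the Fox derivatives $\ppf{\g_{\tau g_i^{-1},g_i}}{g_j}$ are not unique (Proposition~\ref{prop_lyndon}) and the elements $\ppsf{g_i}{g_j}$ inherit this ambiguity. To handle it I would note that two admissible choices of $\nabla{\g_{\tau g_i^{-1},g_i}}$ differ by a combination $\sum_l\nabla{r_l}\cdot k_l$ with $k_l\in\Z[\G]$, so that pairing the difference with $\vp$ contributes $\sum_l\big(\sum_j\vp(g_j)\,|\,\ppf{r_l}{g_j}\big)\,|\,k_l=\sum_l\vp(r_l)\,|\,k_l$, where the equality is Lemma~\ref{lem_cocycle} applied to $r_l\in F$. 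This vanishes because $\vp$, being inflated from $\G$, is trivial on the relators $r_l\in R$. Hence $\sum_j\vp(g_j)\,|\,\ppsf{g_i}{g_j}$ does not depend on which $\Z[\G]$-valued representatives of the Fox derivatives are used in~\eqref{e_hecke}; in particular it agrees with the value obtained from the $\Z[F]$-lifts, which completes the reduction to Lemma~\ref{lem_cocycle}.
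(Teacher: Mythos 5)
Your proof is correct and follows essentially the same route as the paper, which simply cites~\eqref{e_coset}, \eqref{e_hecke} and Lemma~\ref{lem_cocycle} and leaves the interchange of sums implicit. Your additional check that the answer is independent of the non-unique choices of $\pp{\g_{\tau g_i^{-1},g_i}}{g_j}$ in $\Z[\G]$ (using Proposition~\ref{prop_lyndon} and the vanishing of $\vp$ on relators) is a worthwhile point that the paper glosses over, but it does not change the argument.
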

\begin{proof}
This follows immediately from~\eqref{e_coset}, \eqref{e_hecke} and Lemma~\ref{lem_cocycle}.
\end{proof}
We have the following immediate generalization of Proposition~\ref{prop_lyndon} to double cosets.
\begin{prop}\label{prop_lyndonSS}
Let $\X=(X_1, \ldots , X_n)\in \Z[\SS]^n$. We have
 \[
 \sum_{i=1}^n (1-g_i) \cdot X_i=0 \text{ if and only if }
 \X=\sum_{j=1}^m  \nabla {r_j} \cdot k_j  \text{ for some $k_j\in \Z[\SS] $}.
 \]
\end{prop}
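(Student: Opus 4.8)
The plan is to deduce the statement from Proposition~\ref{prop_lyndon} by exploiting that $\Z[\SS]$ is a \emph{free} left $\Z[\G]$-module. Since $\SS$ is a double coset, it is a finite disjoint union of left cosets $\SS=\coprod_{\tau\in\G\backslash\SS}\G\ov\tau$, and $\G$ acts freely on each such coset by left multiplication (the stabilizer of $\ov\tau$ is trivial, as $\ov\tau$ is invertible in the ambient group). Hence the fixed representatives $\{\ov\tau:\tau\in\G\backslash\SS\}$ form a basis of $\Z[\SS]$ over $\Z[\G]$, and each coordinate can be written uniquely as $X_i=\sum_\tau X_{i,\tau}\,\ov\tau$ with $X_{i,\tau}\in\Z[\G]$.

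The ``if'' direction I would treat exactly as in the $\Z[\G]$ case: applying the defining relation $1-r_j=\sum_i(1-g_i)\ppf{r_j}{g_i}$ of the Fox derivatives gives
\[
\sum_{i=1}^n(1-g_i)X_i=\sum_{j=1}^m\Big(\sum_{i=1}^n(1-g_i)\ppf{r_j}{g_i}\Big)k_j=\sum_{j=1}^m(1-r_j)k_j=0,
\]
where the last equality holds because $R$ acts trivially on $\Z[\SS]$ by left multiplication, so $r_jk_j=k_j$.

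For the ``only if'' direction, the crucial observation is that the left action of each $1-g_i$ (and of all of $\Z[\G]$) commutes with the right basis vectors $\ov\tau$. Thus the hypothesis $\sum_i(1-g_i)X_i=0$ decouples: collecting the coefficient of $\ov\tau$ yields $\sum_{i=1}^n(1-g_i)X_{i,\tau}=0$ in $\Z[\G]$ for every $\tau$. I would then apply Proposition~\ref{prop_lyndon} separately to each of these $\Z[\G]$-equations, obtaining $k_{j,\tau}\in\Z[\G]$ with $X_{i,\tau}=\sum_{j=1}^m\ppf{r_j}{g_i}k_{j,\tau}$.

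It remains to reassemble. Setting $k_j=\sum_\tau k_{j,\tau}\,\ov\tau\in\Z[\SS]$ and using once more that $\ppf{r_j}{g_i}\in\Z[\G]$ acts on the left, one recovers $\sum_{j}\ppf{r_j}{g_i}k_j=\sum_\tau X_{i,\tau}\,\ov\tau=X_i$, that is $\X=\sum_j\nabla r_j\cdot k_j$. I do not expect a genuine difficulty; the one point deserving care is the freeness of $\Z[\SS]$ as a left $\Z[\G]$-module together with the compatibility of the left action with the chosen basis, since this is precisely what licenses the coordinatewise reduction to the already-established Proposition~\ref{prop_lyndon}.
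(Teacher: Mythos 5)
Your proposal is correct and follows essentially the same route as the paper: decompose each $X_i=\sum_\tau X_{\tau,i}\ov\tau$ using that $\Z[\SS]$ is free as a left $\Z[\G]$-module on the coset representatives, decouple the relation coset by coset, and apply Proposition~\ref{prop_lyndon} to each component. The only difference is that you spell out the easy ``if'' direction and the reassembly step, which the paper leaves implicit.
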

\begin{proof}
Write each $X_i=\sum_{\tau\in  \G\backslash\SS} X_{\tau,i}\ov{\tau}$ with
$X_{\tau,i}\in \Z[\G]$. It follows that
$$
\sum_i (1-g_i) X_{\tau,i}=0,
$$
for all $\tau$, and the conclusion follows from Proposition~\ref{prop_lyndon}.
\end{proof}
We apply the proposition to the following situation. Since $r_i=1$ in $\G$ we have
\[
0=T_\SS(1-r_i)=\sum_{j=1}^n (1-g_j)\ppsf{r_i}{g_j},
\]
and we obtain:
\begin{cor}\label{cor_nablaSS}
We have the following decompositions inside~$\Z[\SS]^n$:
\[\nabla_{\SS} r_i=M_\SS \nabla r_i =\sum_{k=1}^m \nabla r_k \cdot T_\SS^{i,k},
\]
for some elements $T_\SS^{i,k}\in \Z[\SS]$, $1\le i,k\le m$.
\end{cor}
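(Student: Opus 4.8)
The first equality $\nabla_\SS r_i=M_\SS\nabla r_i$ is nothing but the matrix identity $\nabla_\SS g=M_\SS\nabla g$ established just above, specialized to $g=r_i$; so the real content is the second equality. The plan is to show that the vector $\nabla_\SS r_i$ satisfies the hypothesis of Proposition~\ref{prop_lyndonSS} and then simply read off the asserted decomposition.

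First I would note that, since $r_i\in R$ acts trivially by right multiplication on $\Z[\SS]$, we have $T_\SS\, r_i=T_\SS$ and hence $T_\SS(1-r_i)=0$. Expanding the left-hand side by the $\SS$-derivative identity $T_\SS(1-g)=\sum_{j=1}^n(1-g_j)\pps{g}{g_j}$ with $g=r_i$ gives
\[
\sum_{j=1}^n (1-g_j)\,\ppsf{r_i}{g_j}=0 .
\]
This is precisely the vanishing condition $\sum_{j=1}^n(1-g_j)X_j=0$ of Proposition~\ref{prop_lyndonSS}, applied to the column vector $\X=\nabla_\SS r_i=(\pps{r_i}{g_1},\ldots,\pps{r_i}{g_n})\in\Z[\SS]^n$.

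Applying Proposition~\ref{prop_lyndonSS} then produces elements $k_k\in\Z[\SS]$ with $\nabla_\SS r_i=\sum_{k=1}^m\nabla r_k\cdot k_k$, and setting $T_\SS^{i,k}:=k_k$ yields the claim. I do not expect any genuine obstacle here: all the substance has been front-loaded into the double-coset generalization of Lyndon's lemma (Proposition~\ref{prop_lyndonSS}) and into the derivation of the $\SS$-derivative identity, so the corollary reduces to a direct application. The only point deserving a moment's attention is the triviality of the $R$-action on $\Z[\SS]$, which is exactly what forces $T_\SS(1-r_i)$ to vanish and thereby makes the hypothesis of the proposition available.
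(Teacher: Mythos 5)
Your argument is correct and is essentially identical to the paper's: the paper also observes that $T_\SS(1-r_i)=0$ (since $r_i=1$ in $\G$, equivalently since $R$ acts trivially on $\Z[\SS]$), expands via the identity $T_\SS(1-g)=\sum_j(1-g_j)\,\pps{g}{g_j}$, and invokes Proposition~\ref{prop_lyndonSS} to obtain the decomposition. No difference in substance.
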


\subsection{Double coset operators and the inflation-restriction exact sequence}
\label{sec_infres}
Let $\G=F/R$ as before and $V$ a right $\G$-module. We view $V$ as an $F$-module as well,
by letting $R$ act trivially. Since free groups have cohomological dimension 1,
we have $H^2(F,V)=0$ and the inflation-restriction exact sequence gives:
\be\label{e_infres}
0\lrar H^1(\G, V)\stackrel{\inf}{\longrightarrow} H^1(F,V)
\stackrel{\res}{\longrightarrow}\hom(R,V)^{F}
\stackrel{\tg}{\longrightarrow } H^2(\G,V)
\lrar 0.
\ee
Since $R$ acts trivially on $V$, we have $H^1(R,V)^F=\hom(R,V)^{F}$ consists
of the homomorphisms $f:R\rar V$ such that $f(g^{-1}r g)=f(r)|g$ for $r\in R, g\in F$.

For $\SS$ a double coset of $\G$, the operator $[\SS]$ acts on two of the four cohomology groups in the exact
sequence. We want to define an action of $[\SS]$ on the remaining two groups such that
it is equivariant with respect to all the maps. In this section we concentrate on the first two maps.

Since $F$ is free, we have a map $V^n\rar H^1(F,V)$ taking
a vector $\vv=(v_1\ldots, v_n)$ to the class of the cocycle
$\vp_\vv$ defined by $\vp_\vv(g_i)=v_i$~\cite[Lemma 1.1]{S69}.
We have an exact sequence
\be\label{e_exseqfree}
0\lrar V^\G\lrar V \lrar  V^n \xrightarrow{\;\vv\mapsto [\vp_\vv]\;} H^1(F,V)\lrar  0
\ee
where the second map takes $v\mapsto\left(v|(1-g_1), \ldots, v|(1-g_n)\right)\in V^n$.
We define an action of $[\SS]$ on $V^n$ and on cocycles $\vp:F\rar V$
as follows
\be\label{e_vec_hecke}
(\vv|[\SS])_i:=\sum_{j=1}^n v_j\left| \ppsf{g_i}{g_j}  \right. , \qquad
\vp|[\SS] (g_i):=\sum_{j=1}^n \vp(g_j)\left| \ppsf{g_i}{g_j}\right. .
\ee
By Lemma~\ref{lem_coc_hecke}, the resulting map on $H^1(F,V)$ coincides on the image of $\inf$
with the action of $[\SS]$ on $H^1(\G,V)$,
so the inflation map is equivariant with respect to the two actions.

\begin{prop}\label{prop_free} With the action of $[\SS]$ defined in~\eqref{e_vec_hecke}, we have
\[\tr([\SS], H^1(F,V))=\sum_{i=1}^n \tr(\pps{g_i}{g_i}, V)
-\tr(T_\SS, V)+\tr([\SS],H^0(\G,V))\;.
\]
\end{prop}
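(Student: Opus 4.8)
The plan is to read off the proposition from the four-term exact sequence \eqref{e_exseqfree}, using the additivity of the trace of a compatible endomorphism over an exact sequence of finite-dimensional vector spaces. I equip each term with an action of $[\SS]$: on the middle term $V$ by right multiplication $v\mapsto v|T_\SS$; on $V^n$ by the formula \eqref{e_vec_hecke}; on $V^\G=H^0(\G,V)$ by the restriction of the $V$-action, which is the $H^0$-action $v|[\SS]=\sum_\tau v|\ov\tau$; and on $H^1(F,V)$ by the action induced on the quotient $V^n/\im(V)$. Once all three maps of \eqref{e_exseqfree} are seen to be $[\SS]$-equivariant, finite dimensionality of $V$ forces the alternating sum of traces to vanish,
\[
\tr([\SS],V^\G)-\tr([\SS],V)+\tr([\SS],V^n)-\tr([\SS],H^1(F,V))=0,
\]
and solving for the last term gives the asserted formula.

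Two of the three equivariances are formal. The inclusion $V^\G\hookrightarrow V$ is equivariant because the two actions agree on $V^\G$; along the way one notes that $v|T_\SS\in V^\G$ whenever $v\in V^\G$, since each $\g_{\tau,g}\in\G$ fixes $v$ and $\tau\mapsto\tau g$ permutes the cosets of $\G\backslash\SS$. The surjection $V^n\to H^1(F,V)$ is equivariant by construction, its target action being the quotient of the source action. The one substantive check is that the middle map $v\mapsto(v|(1-g_i))_i$ intertwines $v\mapsto v|T_\SS$ with \eqref{e_vec_hecke}: applying the $V^n$-action to the image of $v$ gives, in the $i$-th slot,
\begin{align*}
\sum_{j=1}^n (v|(1-g_j))\left|\ppsf{g_i}{g_j}\right.
&= v\left|\,\sum_{j=1}^n (1-g_j)\ppsf{g_i}{g_j}\right.\\
&= v\,|\,T_\SS(1-g_i)= (v|T_\SS)\,|\,(1-g_i),
\end{align*}
the middle equality being exactly \eqref{e_hecke}. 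This is the $i$-th slot of the image of $v|T_\SS$, so the square commutes, and I expect this to be the crux of the proof, as it is the only point at which the defining property \eqref{e_hecke} of the $\SS$-derivatives enters.

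Finally I evaluate the two interior traces. The $V$-action is right multiplication by $T_\SS$, so $\tr([\SS],V)=\tr(T_\SS,V)$ by definition. On $V^n$ I regard \eqref{e_vec_hecke} as a block endomorphism of $n$ copies of $V$ whose $(i,j)$-block is right multiplication by $\pps{g_i}{g_j}$; the trace of such an operator is the sum of the traces of its diagonal blocks, so $\tr([\SS],V^n)=\sum_{i=1}^n\tr(\pps{g_i}{g_i},V)$. Substituting these, together with $\tr([\SS],V^\G)=\tr([\SS],H^0(\G,V))$, into the vanishing alternating sum yields the proposition. The only real obstacle is the equivariance verification above; the compatibility of the quotient action on $H^1(F,V)$ with the genuine cohomological operator was already recorded through Lemma~\ref{lem_coc_hecke}, so no additional ingredient is required.
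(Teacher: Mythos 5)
Your proposal is correct and follows essentially the same route as the paper: both arguments rest on the exact sequence \eqref{e_exseqfree}, the key identity $[v|(1-g_i)]_i|[\SS]=[v|T_\SS|(1-g_i)]_i$ coming from \eqref{e_hecke}, and additivity of traces over the sequence. Your write-up merely makes explicit the equivariance checks that the paper leaves implicit.
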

\begin{proof}
The kernel of the map $v\mapsto [\vp_\vv]$ is preserved by the map $\vv\mapsto \vv|[\SS]$ defined above,
and we have
\be \label{e_bdact} [v|(1-g_i)]_i |[\SS]= [v|T_\SS|(1-g_i)]_i \; .\ee
Therefore the map \eqref{e_bdact} corresponds to the map $v\mapsto
v|T_\SS$ on $V$ and $V^\G$. Since
\[
\tr(T_\SS, V^\G)=\tr([\SS],H^0(\G,V)),\qquad   \tr([\SS], V^n)=\sum_{i=1}^n \tr(\pps{g_i}{g_i}, V),
\]
the proof is finished by the exact sequence~\eqref{e_exseqfree}.
\end{proof}

The action of $[\SS]$ on the restriction of $F$-cocycles to $R$ is given in the next lemma.
\begin{prop}\label{prop_res_hecke}
If $\vp:F\rar V$ is a cocycle, for $1\le i \le m$ we have
\[\vp|[\SS] (r_i)=\sum_{k=1}^m \vp(r_k)|T_\SS^{i,k},
\]
 with $T_\SS^{i,k}\in \Z[\SS]$ defined in Corollary~\ref{cor_nablaSS}.
\end{prop}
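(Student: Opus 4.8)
The plan is to exploit the fact that, although $\vp|[\SS]$ is prescribed in~\eqref{e_vec_hecke} only on the generators $g_i$, it is a genuine cocycle on the \emph{free} group $F$, so its value on the relator $r_i\in F$ is completely determined by its values on the generators through Fox calculus. Concretely, I would first record that the identity of Lemma~\ref{lem_cocycle} holds verbatim for \emph{any} cocycle $\psi:F\rar V$, not merely for those inflated from $\G$: its proof is an induction on word length using only~\eqref{e_decg} and the cocycle relation $\psi(gh)=\psi(h)+\psi(g)|h$, neither of which invokes triviality on $R$. Applying this to $\psi=\vp|[\SS]$ and $g=r_i$ gives
\[
\vp|[\SS](r_i)=\sum_{j=1}^n \vp|[\SS](g_j)\left|\ppf{r_i}{g_j}\right.,
\]
with the ordinary (unique) Fox derivatives $\ppf{r_i}{g_j}\in\Z[F]$ acting on $V$ through the projection $F\rar\G$.

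Next I would substitute the definition $\vp|[\SS](g_j)=\sum_{l=1}^n \vp(g_l)|\ppsf{g_j}{g_l}$ and collect terms. Using associativity of the right $\Z[\wG]$-action on $V$ — all the products $\ppsf{g_j}{g_l}\cdot\ppf{r_i}{g_j}$ live in $\Z[\SS]$, which is a right $\Z[\G]$-module — the double sum reorganizes as
\[
\vp|[\SS](r_i)=\sum_{l=1}^n \vp(g_l)\left|\sum_{j=1}^n \ppsf{g_j}{g_l}\,\ppf{r_i}{g_j}\right. .
\]
The inner sum is exactly $\ppsf{r_i}{g_l}$ by the very definition of $\pps{g}{g_i}$ for $g\in F$, so this reads $\vp|[\SS](r_i)=\sum_{l=1}^n \vp(g_l)|\ppsf{r_i}{g_l}$, i.e. the components of $\nabla_\SS r_i$ paired against the generator values.

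Finally I would feed in Corollary~\ref{cor_nablaSS}, which decomposes $\nabla_\SS r_i=\sum_{k=1}^m \nabla r_k\cdot T_\SS^{i,k}$, that is $\ppsf{r_i}{g_l}=\sum_{k=1}^m \ppf{r_k}{g_l}\cdot T_\SS^{i,k}$ component-wise. Substituting and pulling the sum over $k$ to the outside (again by associativity and finiteness of the sums) yields
\[
\vp|[\SS](r_i)=\sum_{k=1}^m\Big(\sum_{l=1}^n \vp(g_l)\big|\ppf{r_k}{g_l}\Big)\Big|T_\SS^{i,k},
\]
and the inner bracket equals $\vp(r_k)$ by Lemma~\ref{lem_cocycle} applied once more, this time to the original cocycle $\vp$ and $g=r_k$. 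This gives the claimed formula $\vp|[\SS](r_i)=\sum_{k=1}^m\vp(r_k)|T_\SS^{i,k}$.

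The only genuine subtlety — and the step I would be most careful about — is the first one: legitimately invoking the Fox-calculus formula for the cocycle $\vp|[\SS]$, whose restriction to $R$ is exactly the object being computed and is \emph{not} assumed trivial, so one must use the free-group form of Lemma~\ref{lem_cocycle} rather than its stated $\G$-cocycle version. Beyond that, the argument is bookkeeping: keeping track of which products land in $\Z[\SS]$ versus $\Z[\G]$ and using associativity of the module action. No new estimates or constructions are required, and the same chain of identities would also apply (with the two invocations of Lemma~\ref{lem_cocycle} unchanged) were one to work over the quotient $\Z[\G]$ rather than $\Z[F]$.
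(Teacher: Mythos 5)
Your proof is correct and follows essentially the same route as the paper's: the chain $\vp|[\SS](r_i)=\sum_{j}\vp(g_j)\big|\ppsf{r_i}{g_j}$, then Corollary~\ref{cor_nablaSS}, then Lemma~\ref{lem_cocycle} applied to $\vp$ and $r_k$. Your careful justification of the first identity --- applying the free-group form of Lemma~\ref{lem_cocycle} to the cocycle $\vp|[\SS]$ itself and invoking the definition of $\ppsf{g}{g_i}$ for general $g\in F$ --- simply makes explicit a step the paper compresses into its citation of Lemma~\ref{lem_coc_hecke}, which as stated covers only the generators.
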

\begin{proof}
 From Lemma~\ref{lem_coc_hecke} and Corollary~\ref{cor_nablaSS} we have:
\[\begin{aligned}
\vp|[\SS] (r_i)&=\sum_{j=1}^n \vp(g_j)\left|\ppsf{r_i}{g_j}\right. =
\sum_{j=1}^n \sum_{k=1}^m\vp(g_j)\left| \ppf{r_k}{g_j} T_\SS^{i,k}\right.\\
&=\sum_{k=1}^m\vp(r_k)| T_\SS^{i,k}.
\end{aligned}\qedhere
\]
\end{proof}

\section{The trace formula on Fuchsian groups}

We now specialize $\G$ to be a discrete, finite covolume subgroup $\G$ of $\PSL_2(\R)$,  so $\G$
has presentation~\eqref{e_fuchsian}.

\subsection{The case $\G$ has cusps}
We assume $h>0$ in the presentation~\eqref{e_fuchsian}. Then we can solve for $t_h$ in terms of the
other generators, which we relabel $g_1,\ldots , g_n$ where $n=2g+\ell+h-1$, so that
the elliptic generators are $g_i=s_i$ for $1\le i\le \ell$.
Therefore $\G=F\slash R$ where $F$ is free on $g_1,\ldots, g_n$ and $R$ is the normal closure
of the relators $r_i=g_i^{m_i}$, for $1\le i\le \ell$.

We assume $V$ is a (right) $k\G$-module where $k$ is any field in which
the orders~$m_i$ of elliptic elements are invertible. It follows that $H^2(\G,V)=0$,
as $\G$ is a free product of the cyclic groups generated by $g_i$, and
one can use the Mayer-Vietoris sequence for free products of groups with amalgamation
due to Swan~\cite[Sec. 2]{S69}.

For $i=1,\ldots, \ell$,
let
\be\label{e_pi}
\pi_i=\frac{\pp{r_i}{g_i}}{m_i}=\frac{1+g_i+\ldots+g_i^{m_i-1}}{m_i} \in \Q[\G]
\ee
be the idempotent associated with $g_i$. 
\begin{prop}\label{prop_cusps}
For $\G$ a finite covolume Fuchsian group with cusps we have:
\[
\sum_{i=0,1} (-1)^i \tr([\SS], H^i(\G,V))=\tr(\wT_\SS, V),
\]
where
\be\label{e_wtss}
\wT_\SS=T_\SS-\sum_{i=1}^n \ppsf{g_i}{g_i}+\sum_{i=1}^\ell \pi_i \cdot \ppsf{g_i}{g_i} \in \Q[\SS].
\ee
\end{prop}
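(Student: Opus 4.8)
The plan is to feed the inflation-restriction sequence \eqref{e_infres} into the two propositions already proved. Since $H^2(\G,V)=0$, that sequence collapses to a short exact sequence $0\to H^1(\G,V)\to H^1(F,V)\to \hom(R,V)^F\to 0$ in which every map is $[\SS]$-equivariant, so traces are additive and $\tr([\SS],H^1(\G,V))=\tr([\SS],H^1(F,V))-\tr([\SS],\hom(R,V)^F)$. Substituting this into the alternating sum and using Proposition~\ref{prop_free} for $\tr([\SS],H^1(F,V))$, the two copies of $\tr([\SS],H^0(\G,V))$ cancel and one is left with
\[
\sum_{i=0,1}(-1)^i\tr([\SS],H^i(\G,V))=\tr(T_\SS,V)-\sum_{i=1}^n\tr\!\left(\pps{g_i}{g_i},V\right)+\tr([\SS],\hom(R,V)^F).
\]
Comparing with the definition~\eqref{e_wtss} of $\wT_\SS$, the whole proposition reduces to the single identity $\tr([\SS],\hom(R,V)^F)=\sum_{i=1}^\ell\tr\!\left(\pi_i\,\pps{g_i}{g_i},V\right)$.

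To evaluate the remaining trace I would first identify $\hom(R,V)^F$ explicitly. The $F$-equivariance forces $f(g_i^{-1}r_ig_i)=f(r_i)|g_i$, and since $r_i=g_i^{m_i}$ is centralized by $g_i$ this says $f(r_i)\in V^{\la g_i\ra}$. Because $\G$ is the free product of the cyclic groups $\la g_i\ra$, the relation module $R/[R,R]$ is the direct sum of the induced relation modules of the factors, so $f\mapsto (f(r_1),\dots,f(r_\ell))$ is an isomorphism $\hom(R,V)^F\xrightarrow{\sim}\bigoplus_{i=1}^\ell V^{\la g_i\ra}$; concretely, any tuple of $g_i$-invariants extends uniquely to an $F$-equivariant homomorphism on $R$. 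Under this identification Proposition~\ref{prop_res_hecke} says $[\SS]$ acts by the matrix $(T_\SS^{i,k})$, so its trace is the sum of the diagonal contributions $\tr\big(w\mapsto w|T_\SS^{i,i},\,V^{\la g_i\ra}\big)$.

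Each diagonal term is computed with the idempotent $\pi_i$, whose image under right multiplication on $V$ is exactly $V^{\la g_i\ra}$. Since right multiplication by $T_\SS^{i,i}$ preserves this image, the standard formula for the trace of an operator restricted to the image of an idempotent gives $\tr(w\mapsto w|T_\SS^{i,i},V^{\la g_i\ra})=\tr(\pi_i T_\SS^{i,i},V)$. Finally I would extract the needed relation from Corollary~\ref{cor_nablaSS}: since $\nabla r_i$ has its only nonzero entry $\pp{r_i}{g_i}=m_i\pi_i$ in position $i$, the $i$-th coordinate of the identity $\nabla_\SS r_i=\sum_k \nabla r_k\, T_\SS^{i,k}$ reads $\pps{g_i}{g_i}\,m_i\pi_i=m_i\pi_i\,T_\SS^{i,i}$, hence (as $m_i$ is invertible) $\pps{g_i}{g_i}\,\pi_i=\pi_i\,T_\SS^{i,i}$. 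Together with the cyclicity of the trace this yields $\tr(\pi_i T_\SS^{i,i},V)=\tr\!\left(\pi_i\,\pps{g_i}{g_i},V\right)$, and summing over $i$ completes the proof.

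The main obstacle is the structural step identifying $\hom(R,V)^F$ with $\bigoplus_i V^{\la g_i\ra}$ — in particular the surjectivity, i.e.\ that a tuple of $g_i$-invariants really does extend to a global $F$-equivariant homomorphism on the (infinitely generated) normal subgroup $R$. This is where the free-product structure of $\G$, equivalently the explicit form of the relation module for a free product of cyclic groups, is essential, and it is the only place the hypotheses on $\G$ and on the invertibility of the $m_i$ enter beyond guaranteeing $H^2(\G,V)=0$. The remaining bookkeeping — the cancellation of the $H^0$ terms, the idempotent trace formula, and the extraction of $\pps{g_i}{g_i}\,\pi_i=\pi_i\,T_\SS^{i,i}$ from Corollary~\ref{cor_nablaSS} — is routine once that identification is in place.
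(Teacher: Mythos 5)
Your proposal is correct and follows essentially the same route as the paper's own proof: reduce via the inflation--restriction sequence and Proposition~\ref{prop_free} to computing $\tr([\SS],\hom(R,V)^F)$, identify $\hom(R,V)^F$ with $\oplus_{i=1}^\ell \im(\pi_i)$, read off the relation $\pps{g_i}{g_i}\,\pi_i=\pi_i\,T_\SS^{i,i}$ from Corollary~\ref{cor_nablaSS}, and convert the trace on $\im(\pi_i)$ into $\tr(\pi_i\,\pps{g_i}{g_i},V)$ via the idempotent. The only differences are cosmetic (you invoke cyclicity of the trace where the paper argues directly with the splitting $V=\im(\pi_i)\oplus\im(1-\pi_i)$, and you are somewhat more explicit about the surjectivity of $f\mapsto[f(r_i)]_i$).
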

\begin{proof}
If $\ell=0$, the theorem follows immediately from Proposition~\ref{prop_free}, so we assume $\ell>0$.
For $a\in\Q[\G]$, we denote by $\im(a), \ker(a)\subset V$ the image and kernel
of $a$ acting on the right on $V$.

The inflation-restriction sequence~\eqref{e_infres} reduces to three terms, and
we identify
\be\label{e_homisom}
\hom(R,V)^F= \oplus_{i=1}^\ell \im (\pp{r_i}{g_i})
\ee
by the map $f\mapsto [f(r_i)]_i$.
Indeed $r_i=g_i^{m_i}$ and $f(r_i)=f(g_i^{-1} r_i g_i)=f(r_i)|g_i$ so
$f(r_i)\in \ker(1-g_i)=\im (1+g_i+\ldots,g_i^{m_i-1})$
(one inclusion in the equality of subspaces is clear, and if $v=v|g_i$ it follows that
$v=v|\pi_i$ since $m_i$ is invertible in $V$, so $v\in \im(\pi_i)=\im (\pp{r_i}{g_i})$ and the other inclusion follows).

In Section~\ref{sec_infres} we have defined an action of the double coset operator $[\SS]$ on
cocycles $\vp: F\rar V$. Corollary~\ref{prop_res_hecke} shows that under the restriction
map $\vp\mapsto [\vp(r_i)]_i$, the action of $[\SS]$ on $\vp$ corresponds to the map
on $\oplus_{i=1}^\ell \im (\pp{r_i}{g_i})$ given by
\[
[v_i]_i \mapsto [\sum_{j=1}^\ell v_j|T_\SS^{i,j}]_i.
\]
We conclude that
\[\tr([\SS], \hom(R,V)^F)=\sum_{i=1}^\ell \tr(T_\SS^{i,i}, \im \pp{r_i}{g_i}).\]

Finally we would like to express the left side as the trace of an operator on $V$ rather than on the subspaces
$\im \pp{r_i}{g_i}$. Note that we can view these subspaces also as $\im \pi_i$,
with $\pi_i$ defined in~\eqref{e_pi}. Consider the exact sequence
\[
0\lrar \im (1-\pi_i)\lrar V \xrightarrow{\;v\mapsto v|\pi_i\;} \im(\pi_i)\lrar 0.
\]
We have that $\pps{g_i}{g_i}\cdot \pi_i=\pi_i\cdot T_\SS^{i,i}$ from
the definition of $T_\SS^{i,i}$ in Corollary~\ref{cor_nablaSS}, and
it follows that the operator $\pi_i\pps{g_i}{g_i}$ acting on $V$ corresponds to $T_\SS^{i,i}$
on $\im(\pi_i)$ under the second map above (using that $\pi_i$ is idempotent).
Since the same operator vanishes on $\im (1-\pi_i)$, we have
$$\tr(\pi_i \pps{g_i}{g_i}, V)= \tr(T_\SS^{i,i}, \im \pi_i),$$
and the proof is finished by Proposition~\ref{prop_free} and the inflation-restriction exact sequence.
\end{proof}

\subsection{The case $\G$ has no cusps.} We now assume that $\G$ has no parabolic elements. By a result of Shimura~\cite[Prop 8.2]{Sh} proved by geometric methods we have
\be\label{e_H2}
H^2(\G,V)\simeq V/ \{v|1-g\ \mid\ v\in V, g\in \G\}.
\ee
For simplicity we assume $\G$ has no elliptic elements either, so $h=\ell=0$ in the presentation~\eqref{e_fuchsian}.
Therefore $\G=F/R$ has a presentation with $F$ free on $n=2g$ generators, and $R$ the normal subgroup generated
by the only relator
\[
r=[g_1,g_2]\cdot\ldots\cdot[g_{n-1},g_n].
\]
By Corollary~\ref{cor_nablaSS}, we have $M_\SS\cdot \nabla r=\nabla r\cdot  T_\SS^\vee$, for
an element $T_\SS^\vee\in \Z[\SS]$. Moreover the element $T_\SS^\vee$ is unique,
by~\cite[Sec. 11]{L50}. 
\begin{prop}\label{prop_compact}
Let $\G$ be a finite covolume Fuchsian group with no parabolic and no elliptic elements.
Let $V$ be a $\wG$-module such that~$H^2(\G,V)=0$. We then have:
\[
\sum_{i=0}^2 (-1)^i \tr([\SS], H^i(\G,V))=\tr(\wT_\SS, V),
\]
where
\be\label{e_wtss1}
\wT_\SS=T_\SS+T_\SS^\vee-\sum_{i=1}^n \ppsf{g_i}{g_i}\in \Z[\SS].
\ee
\end{prop}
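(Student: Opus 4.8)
The plan is to mirror the proof of Proposition~\ref{prop_cusps}, feeding the value of $\tr([\SS],H^1(F,V))$ from Proposition~\ref{prop_free} into the inflation--restriction sequence~\eqref{e_infres}; the only genuinely new ingredient is the identification of $\hom(R,V)^F$ together with the $[\SS]$-action on it. First I would use the hypothesis $H^2(\G,V)=0$ to collapse \eqref{e_infres} into the short exact sequence
\[
0\to H^1(\G,V)\xrightarrow{\;\inf\;} H^1(F,V)\xrightarrow{\;\res\;}\hom(R,V)^F\to 0 .
\]
All three maps are $[\SS]$-equivariant: the inflation map by Lemma~\ref{lem_coc_hecke}, and the restriction map because the $[\SS]$-action on $\hom(R,V)^F$ is, exactly as in the cusped case, the one induced from the action on $F$-cocycles of Section~\ref{sec_infres}, which describes \emph{all} of $\hom(R,V)^F$ precisely because $H^2(\G,V)=0$ makes $\res$ surjective. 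Taking the alternating sum of traces over this sequence and using $H^2(\G,V)=0$ once more gives
\[
\sum_{i=0}^2(-1)^i\tr([\SS],H^i(\G,V))=\tr([\SS],H^0(\G,V))-\tr([\SS],H^1(F,V))+\tr([\SS],\hom(R,V)^F).
\]
Substituting Proposition~\ref{prop_free}, the two copies of $\tr([\SS],H^0(\G,V))$ cancel and I am left with
\[
\sum_{i=0}^2(-1)^i\tr([\SS],H^i(\G,V))=\tr(T_\SS,V)-\sum_{i=1}^n\tr(\pps{g_i}{g_i},V)+\tr([\SS],\hom(R,V)^F).
\]

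It then remains to prove $\tr([\SS],\hom(R,V)^F)=\tr(T_\SS^\vee,V)$, and this is where the pure surface-group structure enters. Since $r$ is not a proper power, the presentation $2$-complex is aspherical, so by Lyndon~\cite{L50} the relation module $R^{ab}=R/[R,R]$ is a free $\Z[\G]$-module of rank one generated by the class of $r$; equivalently the Fox-derivative column $\nabla r$ generates a free cyclic submodule, which is exactly the uniqueness of $T_\SS^\vee$ from \cite[Sec. 11]{L50} invoked before the statement. Because $R$ acts trivially on $V$, an element of $\hom(R,V)^F$ is the same as a $\Z[\G]$-module homomorphism $R^{ab}\to V$, so evaluation $f\mapsto f(r)$ gives an isomorphism $\hom(R,V)^F\xrightarrow{\sim}V$. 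Under this isomorphism, Proposition~\ref{prop_res_hecke} applied to the single relator $r_1=r$, for which $T_\SS^{1,1}=T_\SS^\vee$, says that the $[\SS]$-action is $v\mapsto v|T_\SS^\vee$; hence $\tr([\SS],\hom(R,V)^F)=\tr(T_\SS^\vee,V)$. Combined with the display above this yields $\tr(\wT_\SS,V)$ with $\wT_\SS=T_\SS+T_\SS^\vee-\sum_i\pps{g_i}{g_i}$ as in~\eqref{e_wtss1}.

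The hard part is this last identification, and structurally it is the exact analogue of, but simpler than, the idempotent manoeuvre in Proposition~\ref{prop_cusps}. There each relator $r_i=g_i^{m_i}$ was a proper power, $\hom(R,V)^F$ was a direct sum of \emph{proper} subspaces $\im\pp{r_i}{g_i}\subsetneq V$, and one had to transport $\tr(T_\SS^{i,i},\im\pp{r_i}{g_i})$ back to $V$ through the idempotents $\pi_i$, producing the correction term $\sum_i\pi_i\pps{g_i}{g_i}$. Here the unique relator is not a proper power, the relation module is free cyclic, and so $\hom(R,V)^F$ is \emph{all} of $V$; no idempotent correction is needed, which is why \eqref{e_wtss1} carries the single clean term $T_\SS^\vee$ in its place. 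The point requiring most care is to verify that evaluation at $r$ really intertwines the induced $[\SS]$-action with right multiplication by $T_\SS^\vee$, and to confirm that the hypothesis $H^2(\G,V)=0$ is used only to make $\res$ surjective (so that the $[\SS]$-action on $\hom(R,V)^F$ is fully described by Proposition~\ref{prop_res_hecke}) and to kill the $i=2$ term; as noted in Remark~\ref{rem_compact}, this hypothesis is expected to be inessential.
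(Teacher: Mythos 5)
Your proposal is correct and follows essentially the same route as the paper: identify $\hom(R,V)^F$ with $V$ via $f\mapsto f(r)$, use $H^2(\G,V)=0$ to make the restriction map surjective, transport the $[\SS]$-action to right multiplication by $T_\SS^\vee$ via Proposition~\ref{prop_res_hecke}, and combine with Proposition~\ref{prop_free} and the inflation--restriction sequence. Your explicit appeal to Lyndon's freeness of the relation module $R^{\mathrm{ab}}$ to justify that $f\mapsto f(r)$ is an isomorphism onto all of $V$ merely spells out what the paper leaves implicit in its identification~\eqref{e_homcom}.
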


\begin{proof}
We identify
\be\label{e_homcom}
\hom(R,V)^F=V
\ee
by mapping $f\mapsto f(r)$. Under this identification,
the image of the restriction map in~\eqref{e_infres} is $\sum_{i=1}^n \im(\pp{r}{g_i})$
by Lemma~\ref{lem_cocycle}.

If $H^2(\G,V)=0$, the restriction map in~\eqref{e_infres} is onto. By Proposition~\ref{prop_res_hecke},
the action of $[\SS]$ on
$H^1(F,V)$ corresponds to the action of $T_\SS^\vee$ on $V$ via the identification~\eqref{e_homcom},
and the proof is finished by the exact sequence~\eqref{e_infres} and Proposition~\ref{prop_free}.
\end{proof}

\begin{remark}\label{rem_compact}
By~\eqref{e_H2},
we have $V^\G=V$ if and only if $H^2(\G,V)\simeq V$. In this case we have that restriction
map in~\eqref{e_infres} is trivial, and $\tr([\SS], H^2(\G,V))=\tr([\SS], H^0(\G,V))$ by
Poincar\'e duality \cite[Lemma 1.4.3]{AS}. From Proposition~\ref{prop_free} we obtain
\[
\sum_{i=0}^2 (-1)^i \tr([\SS], H^i(\G,V))=\tr(\wT_\SS', V),\qquad \text{ for }
\wT_\SS'=2T_\SS-\sum_{i=1}^n \ppsf{g_i}{g_i}.
\]
However the operator $\wT_\SS'$, unlike $\wT_\SS$, does not behave nicely with respect to conjugacy classes:
the associated sums $\e'(X)$ defined as in~\eqref{e_eps} will depend on the choice of
representatives~$\ov{\tau}$ made in defining $T_\SS$. In fact we expect that 
$\tr(T_\SS,V)=\tr(T_\SS^\vee,V)$ for $V=V^\G$, that is
\[
\deg T_\SS'=\deg T_\SS=|\G\backslash \SS|,
\]
with $\deg A$ denoting the sum of the coefficients of $A\in\Z[\SS]$.
This would show that Proposition~\ref{prop_compact} holds if $H^2(\G,V)\simeq V$ as well, and
in fact we expect it to hold for arbitrary $V$, but we have been unable to prove it.
 \end{remark}

\subsection{A trace formula}

For an element $A=\sum c_M M\in \Q[\SS]$ and a subset $S\subset \SS$,
denote by $\deg_S A=\sum_{M\in S} c_M$, the sum of the coefficients in $A$
of elements from $S$.

\begin{theorem}\label{thm_main} Let $\G$ be a Fuchsian group of finite covolume, and
assume either $\G$ has cusps, or it has neither cusps nor elliptic elements. Let $V$
be a $\wG$-module, and if $\G$ has no cusps assume also $ H^2(\G,V)=0$.

(a) With the elements $\wT_\SS$ defined in Propositions~\ref{prop_cusps} and~\ref{prop_compact},
we have
\be\label{e_TF}
\sum_{i=0}^2 (-1)^i \tr([\SS], H^i(\G,V)) =\sum_{X\subset \SS} \e(X) \tr(M_X,V)
\ee
where the sum is over  $\G$-conjugacy classes $X\subset \SS$ and
$\e(X)=\deg_X \wT_\SS$.

(b) The coefficients $\e(X)$ are independent of the choice of representatives used to define $T_\SS$.

(c) If $X=\{M\}$ with $\G_M=\G$,  then
$$ \e(X)=- \frac{|\G\backslash\H|}{2\pi},  $$
namely it is equal to the homological Euler-Poincar\'e characteristic of $\G$.
\end{theorem}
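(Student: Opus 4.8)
I would prove the three parts as follows.

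For part~(a) the plan is to read the formula off from Propositions~\ref{prop_cusps} and~\ref{prop_compact}, which already give $\sum_i (-1)^i \tr([\SS],H^i(\G,V))=\tr(\wT_\SS,V)$. Writing $\wT_\SS=\sum_M c_M M$ with finitely many $c_M\ne 0$, linearity gives $\tr(\wT_\SS,V)=\sum_M c_M\,\tr(M,V)$. Since $V$ is a $\wG$-module and $u\in\G\subset\wG$, the operators of $M$ and $uMu^{-1}$ on $V$ are conjugate, so $\tr(M,V)$ depends only on the $\G$-conjugacy class $X$ of $M$; grouping the sum by classes yields $\tr(\wT_\SS,V)=\sum_X\big(\sum_{M\in X}c_M\big)\tr(M_X,V)=\sum_X\e(X)\tr(M_X,V)$ with $\e(X)=\deg_X\wT_\SS$, which is~\eqref{e_TF}.

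For part~(b) I would first set up the correct receptacle for the coefficient sums. Let $\W\subset\Q[\SS]$ be the subspace spanned by all differences $\ss-u\ss u^{-1}$ with $\ss\in\SS$, $u\in\G$. Then $\Q[\SS]/\W$ has basis the $\G$-conjugacy classes, each functional $\deg_X$ factors through it, and it enjoys the cyclic property $[u\ss]=[\ss u]$ (since $u(\ss u)u^{-1}=u\ss$). In this language part~(b) asserts that the class $[\wT_\SS]\in\Q[\SS]/\W$ is unchanged when the coset representatives $\ov\tau$ (and, more generally, the non-unique derivatives) are changed. I would stress at the outset that this is strictly finer than~(a) and does \emph{not} follow from it: if $M,M'\in\SS$ are $\wG$-conjugate but not $\G$-conjugate, then $\tr(M,V)=\tr(M',V)$ for every $\wG$-module $V$, so the functionals $\tr(\cdot,V)$ cannot separate the classes that $\e$ records. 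Hence~(b) must be extracted from the structure of $\wT_\SS$ itself.

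The execution I have in mind is a direct gauge-change computation. Under $\ov\tau\mapsto a_\tau\ov\tau$ with $a_\tau\in\G$, the cocycle factors transform by $\g_{\tau,g}\mapsto a_\tau\,\g_{\tau,g}\,a_{\tau g}^{-1}$, so $T_\SS$, each $\pps{g_i}{g_j}$, and each $\pi_i\pps{g_i}{g_i}$ acquire correction terms that, via the derivation property of $\partial/\partial g_i$, are explicit finite sums involving the $a_\tau$. Reducing everything modulo $\W$ and applying $[u\ss]=[\ss u]$ repeatedly, I expect these corrections to telescope and cancel, leaving $[\wT_\SS]$ unchanged; the ambiguity in the derivatives, governed by Lyndon's $\nabla r_j\cdot k_j$ in Proposition~\ref{prop_lyndon}, is handled by the same bookkeeping. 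The hard part will be organising this cancellation cleanly. Conceptually $[\wT_\SS]$ is a Lefschetz number refined to conjugacy classes (a Reidemeister-type trace living in the conjugation-coinvariants), which one expects to be independent of the cellular data; but in the present presentation-dependent algebraic setup the vanishing of the $a_\tau$-corrections has to be verified by hand, and arranging the terms so the cancellation is visible is the crux.

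For part~(c) the key point is that $\G_M=\G$ forces $M$ to centralise $\G$; as a finite-covolume Fuchsian group is Zariski dense in $\PSL_2$, $M$ must be scalar, whence $\SS=\G M\G=\G M$ is a single right coset. Taking $\ov\tau=M$ gives $T_\SS=M$, and $\ov\tau g_i=Mg_i=g_i M=g_i\ov\tau$ gives $\g_{\tau g_i^{-1},g_i}=g_i$, so with the standard derivatives $\ppf{g_i}{g_j}=\dd_{i,j}$ one obtains $\pps{g_i}{g_i}=M$ for every $i$. Reading off the coefficient of $M$ in $\wT_\SS$: the term $T_\SS$ contributes $1$, the sum $-\sum_{i=1}^n\pps{g_i}{g_i}$ contributes $-n$, and each idempotent term $\pi_i\pps{g_i}{g_i}=\pi_i M$ contributes $1/m_i$ (since $g_i^kM=M$ only for $k=0$); in the cuspless case $M_\SS=M\cdot\id$ forces $T_\SS^\vee=M$, contributing a further $1$. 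With $n=2g+\ell+h-1$ this gives $\e(\{M\})=2-2g-h-\sum_{i=1}^\ell(1-1/m_i)$, which by the Gauss--Bonnet formula~\eqref{e_chi} equals $-|\G\backslash\H|/2\pi$; the independence of the choices made in this computation is precisely part~(b).
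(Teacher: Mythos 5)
Parts (a) and (c) of your proposal are correct and essentially identical to the paper's argument: (a) is the grouping of $\tr(\wT_\SS,V)=\sum_M c_M\tr(M,V)$ by $\G$-conjugacy classes, and (c) is the direct coefficient count for $\SS=\G M$ with $M$ scalar, where $T_\SS=M$, $\pps{g_i}{g_i}=M$, each $\pi_iM$ contributes $1/m_i$ to the coefficient of $M$, and Gauss--Bonnet finishes. Your observation that (b) is strictly finer than (a) and cannot be deduced from traces on $\wG$-modules is also a genuine and correct point.

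The gap is in part (b): you set up the right receptacle (the conjugation-coinvariants $\Q[\SS]/\W$ with the cyclic property $[u\ss]=[\ss u]$) but then only assert that the corrections arising from a gauge change $\ov\tau\mapsto a_\tau\ov\tau$ ``telescope and cancel''; you yourself flag this cancellation as the crux and leave it unverified, so the proof is not complete. The paper's mechanism, which you would need to supply, avoids tracking the $a_\tau$ altogether: if $T_\SS$ and $T_\SS'$ are two choices, their difference $L$ satisfies $\deg_\tau L=0$ for every coset $\tau$, hence $L=\sum_j(1-g_j)Y_j$ with $Y_j\in\Z[\SS]$; the induced ambiguities $L_{ij}$ in the $\SS$-derivatives satisfy $L(1-g_i)=\sum_j(1-g_j)L_{ij}$, and Proposition~\ref{prop_lyndonSS} (Lyndon) pins them down as $L_{ii}=Y_i(1-g_i)+\pi_iZ_i$ (for elliptic $i$, and without the second term otherwise). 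Substituting into $\wT_\SS$ gives a difference $\wL=\sum_i(Y_ig_i-g_iY_i)+\sum_{i\le\ell}\pi_iY_i(1-g_i)$, which is visibly a sum of commutators, i.e.\ lies in your $\W$. In the cuspless case there is a further step you omit entirely: one must also control the change in $T_\SS^\vee$, which the paper does by showing $L^\vee-\sum_jZ_j\,\pp{r}{g_j}$ is killed by left multiplication by $\pp{r}{g_i}=(1-g_{i'})h_i$ and that $1-g$ is injective on $\Q[\SS]$ for $g$ of infinite order. Without these explicit identities (or a completed version of your own telescoping computation), part (b) remains unproved.
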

\begin{proof}
(a) This follows immediately from Propositions~\ref{prop_cusps} and~\ref{prop_compact},
as $\tr(M,V)$ is constant for $M$ in a conjugacy class $X$.

(b) We show that
$\deg_X \wT_\SS$ is independent of the choice of coset representatives
used to define $T_\SS$.
Any two choices of $T_\SS$ differ by an element $L\in \Z[\SS]$ with
$\deg_\tau L=0$ for all $\tau\in \G\backslash\SS$.
Therefore we can write
\be\label{e_L}
L=\sum_{j=1}^r
(1-g_j)Y_j
\ee
for some $Y_j\in \Z[\SS]$. Now the elements $\pps{g_i}{g_j}$ for the two choices
differ by elements $L_{ij}$ such that
$
L(1-g_i)=\sum_{j=1}^n (1-g_j) L_{ij}.
$

Assume first that $\G$ has cusps, so $\wT_\SS$ is given by~\eqref{e_wtss}.
It follows that the two corresponding $\wT_\SS$
differ by an element
$$\wL=L-\sum_{i=1}^n L_{ii}+\sum_{i=1}^l \pi_i L_{ii} .$$
Multiplying~\eqref{e_L} on the right by $(1-g_i)$ and
using Proposition~\ref{prop_lyndonSS} we obtain
\[
L_{ii}=Y_i(1-g_i)+\pi_i Z_i \text{ for } i\le l, \quad 
L_{ii}=Y_i(1-g_i) \text{ for } i>l\;,
\]
for some $Z_i\in\Q[\SS]$. We conclude that
\[
\wL=\sum_{i=1}^n (Y_i g_i-g_i Y_i)+\sum_{i=1}^l \pi_i Y_i (1-g_i)\;,
\]
and it is clear that each term in both sums has degree 0 over each 
conjugacy class.

Assume now $\G$ has no cusps nor elliptic elements, and $\wT_\SS$ is
given by~\eqref{e_wtss1}. Proposition~\ref{prop_lyndonSS} now gives
\[L_{ij}=Y_j(1-g_i)+\ppf{r}{g_j} Z_i,
\]
for some $Z_i\in\Z[\SS]$. By Corollary~\ref{cor_nablaSS} it follows that the elements
$\wT_\SS^\vee$ corresponding to the two choices of representatives differ by $L^\vee$ such that
\[\begin{aligned}
\sum_{j=1}^n L_{ji}\ppf{r}{g_j}&=\ppf{r}{g_i}L^\vee \\
&=\sum_j Y_i(1-g_j)\pp{r}{g_j}+\sum_j \pp{r}{g_i}Z_j\pp{r}{g_j},
\end{aligned}
\]
where in the second line we used the formula above for $L_{ij}$. The first sum vanishes since $r=1$ in $\G$,
and we conclude that
$L^\vee-\sum_j Z_j\pp{r}{g_j} $ is annihilated by left multiplication by $\pp{r}{g_i}$.

The explicit formula for $r$ gives $ \pp{r}{g_i}=(1-g_{i'})h_j$ for $i'=i\pm 1$ and $h_i\in \G$.
If $g$ is of infinite order, we have $\ker (1-g)=0$: if $(1-g)\sum c_M M=0$ in $\Q[\SS]$,
it follows that $c_M=c_{Mg}=c_{Mg^n}$ for all $n$, and if there is $c_M\ne 0$, one would have
infinitely many coefficients $c_{Mg^n}\ne 0$, a contradiction.

It follows that $L^\vee=\sum_j Z_j\pp{r}{g_j} $, and so the two corresponding $\wT_\SS$
differ by
\[\wL=L+L^\vee-\sum_{i=1}^n L_{ii}=\sum_{i}(Y_ig_i-g_iY_i)
+\sum_i Z_i\ppf{r}{g_i}-\ppf{r}{g_i}Z_i,
\]
and clearly $\deg_X \wL=0$ for every conjugacy class $X$, finishing the proof.

\comment{
When $\G$ is free, it can be easily shown that $\deg_X \wT_\SS$
does not depend on the generators $\{ g_1,\ldots, g_n\}$ of
$\G$. Indeed any other basis can be obtained from the given one by 
two types of elementary transformations that leave all 
basis elements unchanged except for one $g_i$, which is mapped as 
follows: (1) $g_i \mapsto 
g_i^{-1}$; (2)  $g_i\mapsto g_ig_j$ for some $j\ne i$. Denoting by 
$\wT_\SS'$ the 
element defined in~\eqref{7} with respect to the new basis (for the 
same  $T_\SS$), it is easy to check 
that  
\[\wT_\SS'-\wT_\SS=
\begin{cases} g_i T_{ii}g_i^{-1} -T_{ii} & \text{in case (1)}\\ 
              g_j^{-1} T_{ii} g_j-T_{ii} & \text{in case (2)}
\end{cases},
\]
so $\deg \wT_{\SS}'^{(\X)}=\deg \wT_{\SS}^{(\X)}$ for all conjugacy 
classes $\X$. 

A similar reasoning applies when some of the $g_i$ 
have finite order, when any set of generators can be obtained from  
the given one by the two elementary transformations above when $g_i$
have infinite order, and when the order of $g_i$ is $n$ by the 
following transformations that leave all generators fixed except 
for $g_i$: (3) $g_i\mapsto g_i^a$ with $a$ coprime to $n$; (4) 
$g_i\mapsto g_j^{-1} g_i g_{j}$ for $j\ne i$.  
}
(c) Since formula \eqref{e_TF} is additive under taking union of double
cosets, it is enough to prove the formula for $\e(X)$ when
$\SS=\G M$ and $X=\{M\}$. In this
case we have
$T_\SS=M$, and it follows that $\pps{g_i}{g_i}=M$ for all $i$.
Also $T_\SS^\vee=M$ if $\G$ has no cusps, and the
coefficient of~$M$ in~\eqref{e_wtss} or \eqref{e_wtss1} gives the formula for $\e(X)$,
by the Gauss-Bonnet formula~\eqref{e_chi}.
\end{proof}
For the trivial double coset $\SS=\G$, the operators $\wT_\G$ can be explicitly computed:
\[\wT_\G=1-n-\sum_{i=1}^n \pi_i,\text{ or }\ \wT_\G=2-n
\]
depending on whether $\G$ has cusps or not. If $\G$ has cusps, the elements 
$g_i^a$ for $1\le i\le l$, $0\le a<m_i$
form a set of representatives  for conjugacy classes of elliptic elements, which we denote $E(\G)$, and we obtain.
\begin{cor}\label{cor_dim}
If $\G$ has cusps, let $V$ be any $\wG$-module, while if $\G$ has no cusps nor elliptic elements
assume either $H^2(\G,V)=0$ or $H^2(\G,V)=V$. Then
\[\sum_i (-1)^i \dim H^i(\G,V)=-\frac{|\G\backslash \H|}{2\pi}\dim V + 
\sum_{g\in E(\G)} \frac{1}{|\G_g|}\tr(g, V) ,
\]
\end{cor}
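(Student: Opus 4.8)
The plan is to specialize the trace formula~\eqref{e_TF} of Theorem~\ref{thm_main} to the trivial double coset $\SS=\G$. First I would observe that $\G\backslash\G$ is a single coset, for which we take the identity element as representative, so that $T_\G=1$ and $\g_{\tau,g}=g$; formula~\eqref{e_coset} then reads $\vp|[\G](g)=\vp(g)$, whence $[\G]$ acts as the identity on cocycles and $\tr([\G],H^i(\G,V))=\dim H^i(\G,V)$. Thus the left side of~\eqref{e_TF} becomes the Euler characteristic $\sum_i(-1)^i\dim H^i(\G,V)$. Next I would compute $\wT_\G$ directly from its definition: since the $\SS$-derivatives reduce to ordinary Fox derivatives when $\SS=\G$, we have $\pps{g_i}{g_i}=\pp{g_i}{g_i}=1$ and $T_\G=1$ (and $T_\G^\vee=1$ in the cocompact case), so~\eqref{e_wtss} gives $\wT_\G=1-n+\sum_{i=1}^\ell\pi_i$ when $\G$ has cusps, while~\eqref{e_wtss1} gives $\wT_\G=2-n$ when $\G$ is cocompact without elliptic elements.

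With these in hand I would apply Theorem~\ref{thm_main}(a), reading off the right side of~\eqref{e_TF} as $\sum_{X\subset\G}\e(X)\tr(M_X,V)$ with $\e(X)=\deg_X\wT_\G$. In the cocompact case $\wT_\G=2-n$ is a scalar multiple of the identity, so only the class $X=\{1\}$ contributes; by Gauss-Bonnet~\eqref{e_chi} with $\ell=h=0$ and $n=2g$ we have $2-n=-|\G\backslash\H|/2\pi$, and since $E(\G)=\emptyset$ the formula follows. When instead $H^2(\G,V)=V$ I would invoke Remark~\ref{rem_compact}, observing that $\wT_\G'=2T_\G-\sum_i\pps{g_i}{g_i}=2-n$ is the same scalar, so the identical conclusion holds under either hypothesis on $H^2$.

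For the cusped case I would analyse the finite support of $\wT_\G=1-n+\sum_{i=1}^\ell\pi_i$, which consists of the identity together with the powers $g_i^a$ for $1\le i\le\ell$ and $1\le a<m_i$. By Theorem~\ref{thm_main}(c) the identity class contributes $\e(\{1\})=-|\G\backslash\H|/2\pi$; substituting $n=2g+\ell+h-1$ into~\eqref{e_chi} confirms that this equals the identity coefficient $1-n+\sum_i 1/m_i$ of $\wT_\G$. For the elliptic classes I would use that the generators $g_i$ stabilize pairwise non-equivalent elliptic points and that each such stabilizer is the cyclic group $\la g_i\ra$ of order $m_i$: this forces $g_i^a$ and $g_j^b$ to be $\G$-conjugate only when $i=j$ and $a=b$, and gives $|\G_{g_i^a}|=m_i$. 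Hence $g_i^a$ is the unique member of its conjugacy class lying in the support of $\wT_\G$, appearing with coefficient $1/m_i$, so $\e([g_i^a])=1/m_i=1/|\G_{g_i^a}|$. Summing over a complete set $E(\G)$ of representatives of the nontrivial elliptic conjugacy classes then produces exactly the stated formula.

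The main obstacle is the conjugacy bookkeeping in this last step: one must verify both that no conjugate of $g_i^a$ other than $g_i^a$ itself occurs in the support of $\wT_\G$, and that the centralizer $\G_{g_i^a}$ has order exactly $m_i$. Both reduce to the standard structure theory of finite-order elements in a finite covolume Fuchsian group—cyclic point stabilizers together with the non-equivalence of the marked elliptic points in the Poincar\'e presentation~\eqref{e_fuchsian}—so once these facts are invoked the remaining verification is routine.
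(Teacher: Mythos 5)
Your proposal is correct and follows essentially the same route as the paper: specialize Theorem~\ref{thm_main} to the trivial double coset, compute $\wT_\G=1-n+\sum_{i=1}^\ell\pi_i$ (resp.\ $2-n$) explicitly, use Gauss--Bonnet for the identity coefficient, and invoke Remark~\ref{rem_compact} for the case $H^2(\G,V)\simeq V$. The only difference is that you spell out the conjugacy-class bookkeeping for the elliptic powers $g_i^a$ (non-conjugacy and $|\G_{g_i^a}|=m_i$), which the paper states without proof.
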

\begin{proof}
Only the case  $H^2(\G,V)=V$ when $\G$ has no parabolic elements requires justification.  In this case the formula follows from Remark~\ref{rem_compact}.
\end{proof}

\comment{
\subsection{Computing $\e(X)$}

In the previous sections we have defined invariants $\e(X)$ for $\G$-conjugacy classes $X$ contained
in a double coset $\SS$ of $\G$, using the operators $\wT_\SS$.
For $M\in \SS$, we let $\DD(M)=\tr^2 M-4 \det M$ be the discriminant of the associated quadratic form,
and we denote by $\G_M$ the centralizer of $M$ in $\G$.

We have the following conjecture.
\begin{conj*}We have
\[\e(X)=\begin{cases}-\frac{\sgn \DD(M)}{|\G_M|}\\
         -\chi(\G)
        \end{cases}
\]
where $M\in X$ is any representative and $\chi(\G)$ has been defined in~\eqref{e_chi}.
\end{conj*}
If $\G_M$ is infinite, we set $\e(X)=0$.

\begin{prop} \label{prop_shapiro}
Assume the group $\G$ has a trace formula of the type:
\[
\sum_{i=0}^n (-1)^i \tr([\SS], H^i(\G,V)) =\sum_{M\in  \SS//\G}\e(M) \tr(M,V)
\]
for every double coset $\SS$ in a commensurator $\wG$ and for every module $V$ over a
field in which the orders of elements of $\G$ are
invertible. If $\G'$ is a finite index subgroup of $\G$, and $\SS'$ a double coset of
$\G'$ such that
$$|\G'\backslash \SS'|=|\G\backslash\G \SS'|,$$
then we have
\[
\sum_{i=0}^n (-1)^i \tr([\SS'], H^i(\G',V)) =\sum_{M\in  \SS'//\G'}\e'(M) \tr(M,V)
\]
for every $\wG$-module $V$, where
\[\e'(M)=\e(M)\cdot [\stab_{\G} M: \stab_{\G'} M ].
\]
\end{prop}
}

\subsection{The modular group}\label{sec_modular} For $\G=\PSL_2(\Z)$, let $g_1, g_2$ be
generators of orders 2, 3 respectively. Let $\SS=\SS_n$ be the double coset of
integral matrices of determinant $n\ge 1$. In~\cite{PZ}, together with Zagier we
defined elements $\wT_n$ giving the action of the Hecke operator $[\SS_n]$
on period polynomials of modular forms, and satisfying an extra property that
allowed us to prove a trace formula as in Proposition~\ref{prop_cusps},
with $\wT_\SS$ replaced by $-\wT_n$.

The element $\wT_\SS$ defined in~\eqref{e_wtss} does not preserve the space of period polynomials
for~$\G$, but we can adjust it so that it does as follows.
Let $X_i\in \Q[\G]$ such that  $1-\pi_i=(1-g_i)X_i$ (that is $X_1=\frac 12$,
$X_2=\frac{2+g_2}{3}$), and define:
\[
\wT_\SS':=T_\SS-\sum_{i=1}^2 (1-g_i)\ppsf{g_i}{g_i} X_i.
\]
It is easily verified that $\deg_X \wT_\SS=\deg_X \wT_\SS'$ for all conjugacy classes $X$.
For $i\ne j\in \{1,2\}$,  we have by the definition~\eqref{e_hecke}
\[\pi_i T_\SS (1-\pi_j)=\pi_i T_\SS (1-g_j)X_j= \pi_i (1-g_j)\ppsf{g_j}{g_j} X_i
\]
so $\pi_i \wT_\SS'= \pi_iT_\SS\pi_j\in \im \pi_j$.

Since $\deg_\tau {\wT_\SS'}=1$ for all cosets $\tau\in
\G\backslash\SS$, it follows from~\cite[Corollary 2]{PZ} that the
adjoint of $-\wT_\SS'$ satisfies properties (A), (B), (C) introduced
for the modular group, so it is one of our previously constructed operators $\wT_n$.
Therefore the constants $\e(X)$ in Theorem \ref{thm_main}
agree with $\w(X)$ defined in the introduction, as shown
in~\cite{PZ} by giving an example of such an element $\wT_n$.

\end{document}